\newtheorem{theorem}{Theorem}[section]
\newtheorem{corollary}[theorem]{Corollary}
\newtheorem{prop}[theorem]{Proposition}
\newtheorem{definition}[theorem]{Definition}
\newtheorem{remark}[theorem]{Remark}
\numberwithin{equation}{section}
\newcommand{\conn}{\nabla}
\newcommand{\der}{\mathrm{d}}
\newcommand{\pairing}[2]{\left( #1\, , \, #2 \right)}
\newcommand{\integer}{\mathbb{Z}}
\newcommand{\rat}{\mathbb{Q}}
\newcommand{\real}{\mathbb{R}}
\newcommand{\cpx}{\mathbb{C}}
\newcommand{\consti}{\mathbf{i}\,}
\newcommand{\sphere}[1]{\mathbf{S}^{#1}}
\newcommand{\proj}{\mathbb{P}}
\newcommand{\SU}{\mathrm{SU}}
\newcommand{\Sp}{\mathrm{Sp}}
\begin{document}

\title[Mirror maps equal SYZ maps for toric CY surfaces]{Mirror maps equal SYZ maps\\for toric Calabi-Yau surfaces}

\author[S.-C. Lau]{Siu-Cheong Lau}
\address{Institute for the Physics and Mathematics of the Universe (IPMU) \\ University of Tokyo \\ Kashiwa \\ Chiba 277-8583 \\ Japan}
\email{siucheong.lau@ipmu.jp}
\author[N.C. Leung]{Naichung Conan Leung}
\address{The Institute of Mathematical Sciences and Department of Mathematics\\ The Chinese University of Hong Kong\\ Shatin \\ Hong Kong}
\email{leung@math.cuhk.edu.hk}
\author[B.S. Wu]{Baosen Wu}
\address{Harvard University\\
   Cambridge, MA 02138\\
   USA}
\email{baosenwu@gmail.com}

\begin{abstract}
We prove that the mirror map is the SYZ map for every toric Calabi-Yau surface.  As a consequence one obtains an enumerative meaning of the mirror map.  This involves computing genus-zero \emph{open} Gromov-Witten invariants, which is done by relating them with closed Gromov-Witten invariants via compactification and using an earlier computation by Bryan-Leung.
\end{abstract}

\maketitle

\section{Introduction}

Mirror map has been an essential ingredient in the study of mirror symmetry for Calabi-Yau manifolds.  It gives a canonical local isomorphism between the K\"ahler moduli and the mirror complex moduli near the large complex structure limit.  Enumerative predictions can only be made in the presence of mirror map, so that one can identify Yukawa couplings among the mirror pair.

Yet geometric meanings of the mirror map remain unclear to mathematicians.  Integrality of coefficients of certain expansion of the mirror map have been studied (see, for example, \cite{lian98,zudilin02,krattenthaler10}), and it is expected that these coefficients contain enumerative meanings.  This paper obtains such a meaning in the study of mirror symmetry for toric (non-compact) Calabi-Yau surfaces.

Let $X$ be a toric Calabi-Yau $n$-fold.  Hori-Vafa \cite{hori00} has written down the mirror family of $X$ as hypersurfaces in $\cpx^2 \times (\cpx^\times)^{n-1}$ via physical considerations.  On the other hand, Strominger-Yau-Zaslow \cite{syz96} proposed a general principle that the mirror should be constructed via T-duality, which is, roughly speaking, taking dual torus fibrations.  From this SYZ perspective a natural question arises: can the mirror written down by Hori-Vafa be obtained by T-duality?

This question has an affirmative answer \cite{CLL}: By taking dual torus bundles and Fourier transform of open Gromov-Witten invariants of $X$ which admits wall-crossing in the sense of Auroux \cite{auroux07,auroux09}, the mirror $\check{X}$ (as a complex manifold) was written down (in Theorem 4.38 of \cite{CLL}) explicitly in terms of K\"ahler parameters and open Gromov-Witten invariants of $X$ (as a symplectic manifold), and this result agrees with Hori-Vafa's one in the sense that $\check{X}$ appears as a member of Hori-Vafa's mirror family.

While the Hori-Vafa recipe gives the mirror complex moduli, this SYZ approach gives an explicit map, which we call the SYZ map, from the K\"ahler moduli to the mirror complex moduli.  An immediate question is, does it agree with the mirror map, which pulls back the mirror canonical complex coordinates to the canonical K\"ahler coordinates? The paper \cite{CLL} has studied examples such as $K_{\proj^1}$ and $K_{\proj^2}$, and the SYZ maps coincide with the mirror maps in these examples.

This paper gives an affirmative answer to this question for $n=2$.  The main result is Theorem \ref{can_coord}, and for convenience we restate it here in one sentence:

\begin{theorem} [(Restatement of Theorem \ref{can_coord})] \label{main_theorem}
For every toric Calabi-Yau surface, the mirror map is the SYZ map.
\end{theorem}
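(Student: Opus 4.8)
\medskip

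\noindent\textit{Strategy of proof.} The plan is to reduce the identity of the two maps to an explicit evaluation of the genus-zero open Gromov--Witten invariants of a toric Calabi--Yau surface $X$, and then to match the resulting generating series against the hypergeometric series defining the mirror map of the Hori--Vafa mirror $\check X$.

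First I would put both maps in explicit form. On the symplectic side, recall from \cite{CLL} that the SYZ map sends the complexified K\"ahler parameters, encoded by monomials $q^\alpha$ in the variables $q_i = e^{-t_i}$, to the mirror complex coordinates expressed as the naive toric monomials times generating functions $1+\delta_i$, where $\delta_i = \sum_{\alpha} n_{\beta_i+\alpha}\, q^\alpha$, the sum running over effective curve classes $\alpha$ with $c_1(\alpha)=0$, $\beta_i$ ranging over the basic disc classes dual to the toric divisors, and $n_{\beta_i+\alpha}$ the genus-zero open GW invariant counting Maslov-index-two discs bounded by a Lagrangian torus fibre; these are glued together across walls in the sense of Auroux \cite{auroux07}. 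On the complex side, recall that the mirror map pulls back the canonical flat coordinates on the base of the Hori--Vafa family, built from the logarithmic solutions of the associated GKZ (Picard--Fuchs) system. With the two expressions in hand, Theorem~\ref{main_theorem} becomes the assertion that each $\delta_i$ equals the corresponding ratio of GKZ solutions; in particular it is a statement purely about the numbers $n_{\beta_i+\alpha}$, which must now be computed.

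The main step is therefore the evaluation of $n_{\beta_i+\alpha}$. A smooth toric CY surface retracts onto a chain of $(-2)$-curves (the $A_k$ resolution) and is not in general the total space of a canonical bundle over a compact Fano, so the method used for $K_{\proj^n}$ has to be replaced. I would compactify $X$ to a toric surface $\bar X$ by adding a divisor ``at infinity'', chosen so that the relevant Lagrangian fibre degenerates to a point at a corner and a Maslov-index-two disc in class $\beta_i+\alpha$ is capped off to a genus-zero stable map in a definite curve class passing through that point; a gluing/degeneration argument then identifies $n_{\beta_i+\alpha}$ with a closed genus-zero GW invariant of $\bar X$ with a point constraint. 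These closed invariants are fibre-class invariants of a rational elliptic surface obtained from $\bar X$, which is exactly the setting of Bryan--Leung's computation: their degeneration to nodal configurations together with the multiple-cover analysis over the chain of rational curves yields a closed form for the generating function $1+\delta_i$, which I then substitute into the SYZ-map formula.

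Finally I would carry out the comparison: expand the logarithmic GKZ solutions attached to the same lattice polytope defining $X$, and check that the series assembled from the Bryan--Leung answer for the $n_{\beta_i+\alpha}$ reproduces them --- either order by order with an induction on the curve class, or via a single generating-function identity matching both sides to the same expression. The hardest part, I expect, is the open--closed correspondence and its bookkeeping: controlling the extra disc and sphere classes introduced by the compactification, verifying that the obstruction bundles (hence the multiple-cover contributions over the $(-2)$-curves) match on the two sides, and translating the Bryan--Leung answer, indexed by fibre-curve configurations, into the lattice-point combinatorics appearing on the mirror side. Once those are aligned, the equality of the mirror map and the SYZ map follows.
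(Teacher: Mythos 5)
Your strategy for the enumerative core --- compactify $X$ toricly, identify $n_{\beta_l+\alpha}$ with a point-constrained closed genus-zero invariant of $\bar X$, and evaluate it via Bryan--Leung's computation for a $(-1)$-curve attached to a chain of $(-2)$-curves --- is exactly the route the paper takes (with one small omission: the point constraint is removed by blowing up a point and using Hu/Gathmann, so that Bryan--Leung's unconstrained invariants of $\tilde Y$ apply directly). Up to that bookkeeping, this half of your proposal matches the paper's Theorem on the open invariants and its corollary that the mirror curve factorizes as $uv=(1+z)(1+q_1z)\cdots(1+q_1\cdots q_{m-1}z)$.

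Where your proposal has a genuine gap is the final comparison. You plan to ``expand the logarithmic GKZ solutions attached to the same lattice polytope'' and match them order by order against the series built from the open invariants, but you never say how the mirror-map side is actually to be computed or which combination of GKZ solutions gives the canonical flat coordinates (i.e., the periods over an integral basis of 2-cycles); that identification is precisely the nontrivial content, and asserting a series match ``by induction on the curve class or a generating-function identity'' is not an argument. The paper avoids this entirely: it uses the SYZ data itself to produce both the holomorphic volume form $\check\Omega=\der\log z\wedge\der\log u$ and an explicit basis of 2-cycles $\check\Theta_l$, obtained by T-dualizing each compact divisor $D_l$ to a chain in the mirror whose boundary circles sit over roots of the factorized $g(z)$ and close up (after the limit $K\to+\infty$) into closed surfaces $S_l=\{|u|=|v|,\ z\in[-(q_1\cdots q_l)^{-1},-(q_1\cdots q_{l-1})^{-1}]\}$. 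The period is then a one-line exact computation, $\int_{\check\Theta_l}\check\Omega=\log q_l=-\int_{D_l}\omega$, so no GKZ system, series inversion, or order-by-order matching is needed; the factorization of the mirror curve coming from your (correct) open-invariant computation is what makes this possible, and your proposal does not exploit it. To complete your version you would either have to supply the B-side period/flat-coordinate computation independently (essentially redoing Hosono-type analysis) or replace that step by the explicit cycle construction above.
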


Now since the SYZ map is written down in terms of enumerative invariants (namely, the one-pointed genus-zero open Gromov-Witten invariants of $X$), we obtain a geometric meaning of the mirror map.  Moreover, in these cases the open Gromov-Witten invariants are indeed integer-valued.  As a result, one obtains integrality of the coefficients of the mirror map.

To prove this theorem, we need to compute one-pointed genus-zero open Gromov-Witten invariants of a Lagrangian toric fiber $\mathbf{T} \subset X$.  Our strategy is to relate the open invariants to some closed invariants of $\bar{X}$, where $\bar{X}$ is a suitable toric compactification of $X$.  Then by the results of Bryan-Leung \cite{bryan-leung00} when they compute the Yau-Zaslow numbers for elliptic K3 surfaces, we obtain the answers for these open invariants (see Theorem \ref{Thm_openGW}).  This strategy is based on a generalization of the relation between open and closed invariants proved by Chan \cite{Chan10}, and this strategy has also been used in \cite{LLW10} for computing open invariants of certain toric Calabi-Yau threefolds.

Since we are in the $\dim_{\cpx} = 2$ situation so that every Calabi-Yau is automatically hyper-K\"ahler, there is another approach to mirror symmetry via hyper-K\"ahler twist.  We'll see (in Section \ref{hyperKaehler}) that the SYZ mirror is consistent with this hyper-K\"ahler perspective.

The organization of this paper is as follows.  A short review on toric manifolds (with an emphasis on its symplectic geometry) is given in Section \ref{toric}.  Then in Section \ref{SYZ} we specialize the SYZ mirror construction proposed in \cite{CLL} to toric Calabi-Yau surfaces.  Section \ref{main_section} is the main section, which computes the open Gromov-Witten invariants and proves Theorem \ref{main_theorem}.

\begin{remark}
Having computed the open invariants, we see that the mirror $\check{X}$ constructed via the SYZ approach agrees with the one written down by Hosono, who approached the subject from the perspective of hypergeometric series instead.
\end{remark}

\section*{Acknowledgements}
We are grateful to Kwokwai Chan for helpful discussions on his work on mirror symmetry for toric nef manifolds \cite{Chan10} and bringing our attention to the work of Hosono \cite{hosono06}.  The first author would like to thank Cheol-Hyun Cho for enlightening discussions on open Gromov-Witten invariants at Seoul National University on June, 2010.   The first and second author would like to thank Andrei C\u ald\u araru and Yong-Geun Oh for the hospitality and joyful discussions at University of Wisconsin, Madison.

The work of the first author was supported by World Premier International Research Center Initiative (WPI Initiative), MEXT, Japan.  The work of the second author described in this paper was substantially supported by a grant from the Research Grants Council of the Hong Kong Special Administrative Region, China (Project No. CUHK401809).

\section{Toric Calabi-Yau surfaces} \label{toric}
 
\subsection{A quick review on toric manifolds}
Let's begin with some notations and terminologies for toric manifolds.  Let $N \cong \integer^n$ be a lattice, and for simplicity we'll always use the notation $N_R := N \otimes R$ for a $\integer$-module $R$.  From a simplicial convex fan $\Sigma$ supported in $N_\real$ we obtain a toric complete complex $n$-fold $X = X_\Sigma$ which admits an action from the complex torus $N_\cpx / N \cong (\cpx^\times)^n$, which accounts for its name `toric manifold'.  There is an open orbit in $X_\Sigma$ on which $N_\cpx / N$ acts freely, and by abuse of notation we'll also denote this orbit by $N_\cpx / N \subset X_\Sigma$.

We denote by $M$ the dual lattice of $N$.  Every lattice point $\nu \in M$ gives a nowhere-zero holomorphic function
$\exp 2\pi\consti \pairing{\nu}{\cdot} : N_\cpx / N \to \cpx $
which extends as a meromorphic function on $X_\Sigma$.  Its zeroes and poles give a toric divisor which is linearly equivalent to $0$.  (A divisor $D$ in $X_\Sigma$ is toric if $D$ is invariant under the action of $N_\cpx / N$ on $X_\Sigma$.)

If we further equip $X_\Sigma$ with a toric K\"ahler form $\omega$, then the action of $\mathbf{T} := N_\real / N$ on $X_\Sigma$ induces a moment map
$$\mu_0: \proj_{\Sigma} \to M_\real$$
whose image is a polyhedral set $P \subset M_\real$ defined by a system of inequalities
$$\pairing{v_j}{\cdot} \geq c_j$$
where  $v_j \in N$ for $j=0, \ldots, m$ are all primitive generators of rays of $\Sigma$, and $c_j \in \real$ are some suitable constants.

The polyhedral set $P$ admits a natural stratification by its faces.  Each codimension-one face $T_j \subset P$ which is normal to $v_j \in N$ corresponds to an irreducible toric divisor $D_j = \mu_0^{-1} (T_j) \subset X_\Sigma$ for $j = 0, \ldots, m$, and all other toric divisors are generated by $\{D_j\}_{j=0}^{m}$.  For example, the anti-canonical divisor $K_X^{-1}$ is $\sum_{j=0}^{m} D_j$.

\subsection{Classification of toric Calabi-Yau surfaces}

\begin{definition}
A toric manifold $X = X_\Sigma$ is Calabi-Yau if its anti-canonical divisor $K^{-1}_X = \sum_{i=0}^{m} D_i$ is linearly equivalent to $0$ in a toric way, in the sense that there exists an $N_\cpx / N$-invariant holomorphic function whose zero divisor is $K^{-1}_X$.
\end{definition}

We notice that by definition a toric Calabi-Yau possesses a non-zero holomorphic function, and hence it must be non-compact.  Since every $N_\cpx / N$-invariant holomorphic function is of the form $\exp 2\pi\consti (\underline{\nu}, \cdot)$ for some $\underline{\nu} \in M$, an alternative definition is that there exists $\underline{\nu} \in M$ such that $\pairing{\underline{\nu}}{v_i} = 1$ for all primitive generators $v_i \in N$ of rays of $\Sigma$.

A toric Calabi-Yau manifold possesses a holomorphic volume form, which is locally written as $\der\zeta_0 \wedge \ldots \wedge \der\zeta_{n-1}$, where $\{\zeta_i\}_{i=0}^{n-1}$ are local complex coordinates corresponding to the basis dual to $\{v_i\}_{i=0}^{n-1}$. In this paper we'll concentrate on toric Calabi-Yau surfaces, which is classified by the number of rays in its fan:

\begin{prop}
Let $\Sigma_m$ be the convex fan supported in $\real^2$ whose rays are generated by $(i,1)$ for $i = 0, \ldots, m$.  Then $X_{\Sigma_m}$ is a toric Calabi-Yau surface.  Conversely,  if $X_{\Sigma}$ is a toric Calabi-Yau manifold, then $X_{\Sigma} \cong X_{\Sigma_m}$ as toric manifolds for some $m \geq -1$.  ($m = -1$ means that the fan is $\{0\}$ and so $X_{\Sigma_m} \cong (\cpx^\times)^2$.
\end{prop}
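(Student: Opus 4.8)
The plan is to check the Calabi-Yau condition directly for the fans $\Sigma_m$, and then, for the converse, to straighten an arbitrary toric Calabi-Yau surface by a lattice automorphism until its fan becomes one of the $\Sigma_m$. For the first assertion I would use the alternative characterization recalled above: a toric manifold $X_\Sigma$ is Calabi-Yau precisely when there is $\underline{\nu} \in M$ with $\pairing{\underline{\nu}}{v_i} = 1$ for every primitive generator $v_i$ of a ray of $\Sigma$. For $\Sigma_m$ one takes $\underline{\nu} = (0,1) \in M = \integer^2$, since $\pairing{(0,1)}{(i,1)} = 1$ for all $i = 0, \ldots, m$; it then only remains to note that $\Sigma_m$ is a genuine smooth fan, its maximal cones being the $\langle (i,1),(i+1,1)\rangle$ for $0 \le i \le m-1$ with $\left|\det\begin{pmatrix} i & i+1 \\ 1 & 1\end{pmatrix}\right| = 1$, so that consecutive generators form a $\integer$-basis of $N$. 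The cases $m = 0$ (a single ray) and $m = -1$ (the zero fan, with $X_{\Sigma_{-1}} = (\cpx^\times)^2$) are immediate, so each $X_{\Sigma_m}$ is a toric Calabi-Yau surface.

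For the converse, let $X_\Sigma$ be a toric Calabi-Yau surface, so $N$ has rank $2$; if $\Sigma$ has no rays there is nothing to prove ($m = -1$), so let $v_0, \ldots, v_k$ be its primitive ray generators. By the characterization there is $\underline{\nu} \in M$ with $\pairing{\underline{\nu}}{v_i} = 1$ for all $i$, and such a $\underline{\nu}$ is necessarily primitive, so after a change of basis of $N$ we may assume $\pairing{\underline{\nu}}{(x,y)} = y$; then each $v_i$ lies on the affine line $\{y = 1\}$, that is $v_i = (a_i, 1)$ with the $a_i \in \integer$ pairwise distinct, and we relabel so that $a_0 < \cdots < a_k$. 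Assuming $k \ge 1$ (the case $k = 0$ being immediate), each maximal cone of $\Sigma$ is two-dimensional and spanned by two of the $v_i$, hence contained in the cone $\sigma$ over the segment joining $(a_0,1)$ and $(a_k,1)$; convexity of $|\Sigma|$ forces $|\Sigma| = \sigma$, so these cones tile $\sigma$, and a cone on non-adjacent generators $v_i, v_j$ would contain an intermediate ray $v_\ell$ in its interior, contradicting the fan axiom. Thus the maximal cones are exactly the $\langle v_i, v_{i+1}\rangle$, smoothness of each gives $a_{i+1} - a_i = 1$, so $a_i = a_0 + i$, and the shear $(x,y) \mapsto (x - a_0 y, y)$, a lattice automorphism of $N$ of determinant $1$, sends each $v_i$ to $(i,1)$ and hence $\Sigma$ onto $\Sigma_k$, so it induces an isomorphism $X_\Sigma \cong X_{\Sigma_k}$ of toric manifolds, which is the claim with $m = k$.

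Most of the argument is routine bookkeeping through the standard dictionary between smooth fans and smooth toric varieties. The one step I expect to require genuine care is the middle of the converse, where convexity of $|\Sigma|$ and smoothness of $\Sigma$ must be combined to exclude every fan whose two-dimensional cones fail to be consecutive pairs of unit-lattice-distance rays on the line $\{y = 1\}$, together with a clean handling of the degenerate cases of small $m$.
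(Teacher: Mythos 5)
Your proof is correct and follows essentially the same route as the paper: verify the Calabi-Yau criterion for $\Sigma_m$ via $\underline{\nu}=(0,1)$, then for the converse use $\underline{\nu}$ to place all primitive ray generators on the affine line $\{y=1\}$ and use smoothness of consecutive cones to force unit spacing, finishing with a lattice automorphism. You are merely more explicit than the paper about the degenerate cases and about why convexity and the fan axioms force the maximal cones to be spanned by consecutive rays, details the paper's induction leaves implicit.
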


\begin{proof}
Taking $\underline{\nu} = (0,1) \in \integer^2$, one has $\pairing{\underline{\nu}}{(i,1)} = 1$ for all $i = 0, \ldots, m$.  Thus $X_{\Sigma_m}$ is a toric Calabi-Yau surface.

Now suppose $X_{\Sigma}$ is a toric Calabi-Yau surface whose fan $\Sigma$ has rays generated by $v_i \in N$ for $i = 0, \ldots, m$.  We may take $\{v_0, v_1\}$ as a basis of $N$ and identify it with $\{(0,1), (1,1)\} \subset \integer^2$.  Then $\pairing{\underline{\nu}}{v_0} = \pairing{\underline{\nu}}{v_1} = 1$ implies that $\underline{\nu}$ is identified with $(0,1)$.  Moreover, since for each $i = 0, \ldots, m$, $\pairing{\underline{\nu}}{v_i} = 1$, $v_i$ must be identified with $(k_i, 1)$ for some $k_i \in \integer$.  Without lose of generality we may assume that $v_0, \ldots, v_m$ are labeled in the clockwise fashion, so that $\{k_i\}$ is an increasing sequence.  Inductively, using the fact that $\{v_{i-1}, v_{i}\}$ is simplicial, one can see that $k_i = i$ for all $i = 0, \ldots, m$.
\end{proof}

\begin{remark}
Every toric Calabi-Yau surface $X_{\Sigma_m}$ for $m \geq 1$ is the toric resolution of $A_{m-1}$ singularity $\cpx^2 / \integer_{m}$, whose fan is the cone $\real_{\geq 0}\langle (0,1), (m,1) \rangle \subset \real^2$.  (See Figure \ref{A_n resolution}.) $\{D_i\}_{i=1}^{m-1}$ is the set of compact irreducible toric divisors, and it generates $H_2(X, \integer)$.  The K\"ahler moduli of $X_{\Sigma_m}$ has canonical K\"ahler coordinates given by
$$q_i := \exp \left(- \int_{D_i} \omega\right)$$
for $i = 1, \ldots, m-1$.
\end{remark}

\begin{figure}[htp]
\caption{Toric resolution of $\cpx^2 / \integer_{m}$.} \label{A_n resolution}
\begin{center}
\includegraphics[height=208pt,width=324pt]{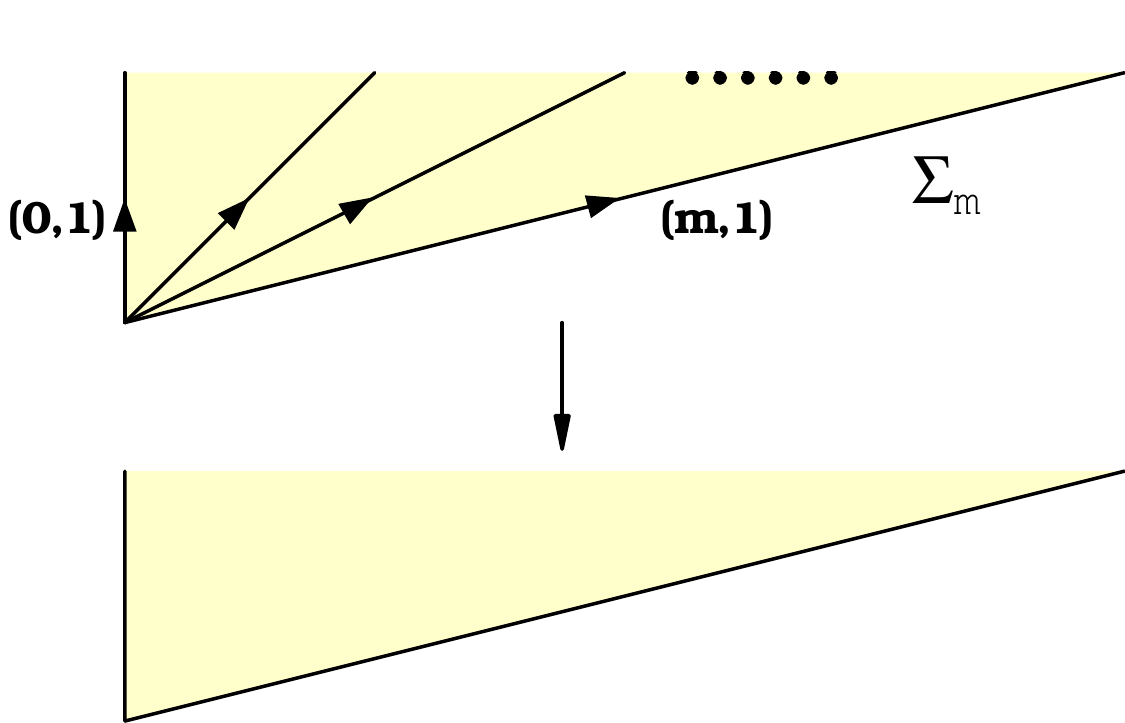}
\end{center}
\end{figure}

\subsection{Symplectic invariants}
We would be interested in the symplectic geometry of $X = X_{\Sigma_m}$.  This subsection gives a brief review on some important symplectic invariants that we'll use later.
%\begin{remark}
%For each K\"ahler class there is a $\mathbf{T}^n$-invariant K\"ahler form $\omega_{\mathrm{CY}}$ such that the corresponding metric is Calabi-Yau \cite{???}.  We may use $\omega_{\mathrm{CY}}$ instead throughout this paper, which will give the same results.
%\end{remark}

For a Lagrangian torus $T$ in a symplectic manifold $(X, \omega)$, let $\pi_2(X,T)$ denote the group of homotopy classes of maps
$$u: (\Delta, \partial\Delta) \to (X, T)$$
where $\Delta := \{z \in \cpx: |z| \leq 1\}$ denotes the closed unit disk in $\cpx$.  For $\beta \in \pi_2(X,T)$, the two most important classical symplectic invariants are its symplectic area $\int_\beta \omega$ and its Maslov index $\mu (\beta)$.  Moreover, we have the open Gromov-Witten invariants defined by FOOO \cite{FOOO_I,FOOO_II} which is central to the study of mirror symmetry:

\begin{definition} [(\cite{FOOO_I,FOOO_II})]
Let $X$ be a symplectic manifold together with a choice of compatible almost complex structure.  Given a Lagrangian torus $T \subset X$ and $\beta \in \pi_2 (X, T)$, the genus zero one-pointed open GW-invariant $n^T_\beta$ is defined as
$$n^T_\beta := \pairing{[\mathcal{M}_1(T, \beta)]}{[\mathrm{pt}]}.$$
In the above expression $\mathcal{M}_1(T, \beta)$ is the moduli space of stable maps $(\Sigma, \partial\Sigma, p_0) \to (X,T)$ where $\Sigma$ is a genus zero Riemann surface with a connected boundary $\partial\Sigma$ and $p_0 \in \partial\Sigma$.  $[\mathcal{M}_1(T, \beta)] \in H_n(T, \rat)$ denotes its virtual fundamental chain, so that we may take the Poincar\'e pairing with the point class $[\mathrm{pt}] \in H_0(T, \integer)$ to give a rational number.
\end{definition}

From now on we may write $n_\beta = n^T_\beta$.  Recall that the moduli space $\mathcal{M}_k(T, \beta)$ of stable disks with $k$ marked points representing $\beta$ has expected dimension $n + \mu(\beta) + k - 3$.  In our situation $k = 1$, and so the expected dimension is $n + \mu(\beta) - 2$, which matches with $\dim T = n$ if and only if $\mu(\beta) = 2$.  Thus $n_\beta \not= 0$ only when $\mu(\beta) = 2$.  Coming back to toric manifolds, we have the following result by Cho-Oh \cite{cho06} and FOOO \cite{FOOO1}:

\begin{prop} [(\cite{cho06,FOOO1})] \label{Cho-Oh}
Let $X$ be a toric manifold and $\mathbf{T} \subset X$ be a Lagrangian toric fiber.  One has $n^{\mathbf{T}}_{\beta_i} = 1$ where $\beta_i \in \pi_2 (X, \mathbf{T})$ are the basic disk classes which are of Maslov index two.  Moreover, for all $\beta \in \pi_2 (X, \mathbf{T})$, $n_\beta \not= 0$ only when $\beta = \beta_i + \alpha$ for some $i = 1, \ldots, m-1$ and $\alpha \in H_2(X)$ represented by some rational curves with $K_X \cdot \alpha = 0$.
\end{prop}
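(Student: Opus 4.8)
This is a known result --- due to Cho-Oh \cite{cho06} in the Fano case, and to FOOO \cite{FOOO1} in general --- and the plan is to follow their arguments. The starting point is Cho-Oh's classification of holomorphic disks $u\colon(\Delta,\partial\Delta)\to(X,\mathbf{T})$: written in toric coordinates, each coordinate of $u$ is a Blaschke product times a nowhere-zero holomorphic function, so $\mu(\beta)=2\sum_j\beta\cdot D_j$ with every intersection number $\beta\cdot D_j\ge 0$. Hence every non-constant holomorphic disk has $\mu\ge 2$: if $\mu=0$ the disk misses all toric divisors, lies in the open orbit $N_\cpx/N$, and is constant by the maximum principle; and if $\mu=2$ then $u$ meets a single divisor $D_i$ exactly once, so $\beta=\beta_i$ is the corresponding basic class. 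This already shows that a stable disk in a basic class $\beta_i$ admits no bubbling: any sphere bubbles would carry an effective class $\alpha'$ with $c_1(X)\cdot\alpha'=0$ (here equal to $0$, since $X$ is Calabi-Yau), so the surviving disk component would still have Maslov index two, forcing it to be a basic class $\beta_j$ with $\partial\beta_j=\partial\beta_i$, that is $j=i$ and $\alpha'=0$. Therefore $\mathcal{M}_1(\mathbf{T},\beta_i)$ is already compact, consisting of honest holomorphic disks.

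To evaluate $n_{\beta_i}$ I would use the explicit description: the disks in class $\beta_i$ form, modulo reparametrization and the $\mathbf{T}$-action, a single $\mathbf{T}$-orbit, and Fredholm regularity holds because $(u^*TX, u^*T\mathbf{T})$ splits as a sum of line bundle pairs of non-negative Maslov index, on each of which the linearized Cauchy-Riemann operator is surjective. Thus $\mathcal{M}_1(\mathbf{T},\beta_i)$ is a smooth manifold, the evaluation at the boundary marked point is a diffeomorphism onto $\mathbf{T}$, and $n_{\beta_i}=\pairing{[\mathcal{M}_1(\mathbf{T},\beta_i)]}{[\point]}=\deg(\mathrm{ev})=1$.

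For the necessary condition, suppose $n_\beta\ne 0$; as noted above this forces $\mu(\beta)=2$, and $\mathcal{M}_1(\mathbf{T},\beta)\ne\emptyset$. A stable disk representing $\beta$ has a genus zero domain, so its non-constant components are disks with boundary on $\mathbf{T}$ together with $J$-holomorphic spheres; by the classification each non-constant disk component has Maslov index $\ge 2$, so there is exactly one of them, lying in a basic class $\beta_i$, and the sphere components carry the class $\alpha:=\beta-\beta_i$, which has trivial boundary --- hence lies in $H_2(X)$ --- and is represented by a union of rational curves. The identity $\mu(\beta)=\mu(\beta_i)+2\,c_1(X)\cdot\alpha=2+2\,c_1(X)\cdot\alpha$ together with $\mu(\beta)=2$ forces $K_X\cdot\alpha=0$ (automatic here, $K_X$ being trivial); and when $\alpha\ne 0$ a connectedness check rules out $i=0$ and $i=m$, because the standard disk attached to a non-compact toric divisor meets that divisor away from its unique torus-fixed point and hence cannot be joined to any compact curve, leaving $i=1,\dots,m-1$. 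The one genuinely analytic ingredient is the transversality of the basic disks and the degree-one count of their evaluation map; for toric Calabi-Yau surfaces this is elementary because every Maslov-two disk is the explicit standard disk, and in general it is precisely the content of \cite{cho06,FOOO1}.
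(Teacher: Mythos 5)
The paper does not actually prove this proposition: it is imported verbatim from Cho--Oh \cite{cho06} and FOOO \cite{FOOO1}, and your sketch is essentially a reconstruction of those references' arguments (Blaschke-product classification, Maslov index $= 2\sum_j \beta\cdot D_j$, regularity of basic disks, degree-one evaluation map, and the decomposition of a Maslov-two stable disk into one basic disk plus rational curves). So in substance you are taking the same route the paper takes, namely deferring the analytic content to \cite{cho06,FOOO1}, but spelling it out. Two caveats are worth recording. First, your exclusion of bubbling in a basic class and your claim that the sphere part of a stable disk satisfies $c_1(X)\cdot\alpha'=0$ use the Calabi--Yau hypothesis (for a general toric manifold the Maslov count only gives $c_1(X)\cdot\alpha'\le 0$, since spheres inside divisors can have negative Chern number, and the disk component can then have Maslov index $>2$; handling this is exactly where FOOO's argument is more delicate). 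Since the proposition as stated is for an arbitrary toric manifold, your argument really establishes the Calabi--Yau surface case the paper uses, with the general case remaining a citation --- which is acceptable here but should be said explicitly. Second, your connectedness argument ruling out $i=0$ and $i=m$ is a genuine addition not contained in the cited general statement; it is correct (the basic disk meets the non-compact divisor only in its open orbit, while all compact rational curves lie in $D_1\cup\dots\cup D_{m-1}$, so no non-constant sphere tree can attach), and it is needed to make sense of the index range $i=1,\dots,m-1$ in the proposition --- which, read literally with $\alpha=0$ allowed, would contradict $n_{\beta_0}=1$, so that restriction should be understood as applying to the corrections with $\alpha\neq 0$, exactly as your argument shows.
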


In the above proposition, $n_\beta$ is explicitly known in unobstructed situations.  When $X$ is non-Fano and $\beta = \beta_i + \alpha$ for $\alpha \in H_2(X) - \{0\}$, $\mathcal{M}_1(\mathbf{T}, \beta)$ may be obstructed which makes it difficult to compute $n_\beta$.  In Section \ref{open_GW} we'll overcome this problem when $X$ is a toric Calabi-Yau surface.

\section{The mirror of a toric Calabi-Yau surface via SYZ} \label{SYZ}
Via SYZ construction, the mirror of a toric Calabi-Yau manifold $X$ of any dimension is written down in terms of K\"ahler parameters and open Gromov-Witten invariants of $X$ \cite{CLL}.  Restricting to $\dim X = 2$, the result is:

\begin{theorem}[(Surface case of Theorem 4.38 in \cite{CLL})] \label{mirror theorem}
Let $X = X_{\Sigma_m}$ be a toric Calabi-Yau surface.  By SYZ construction the mirror of $(X, \omega)$ is the complex manifold
$$\check{X} := \left\{(z,u,v) \in \cpx^\times \times \cpx^2 : uv = 1 + \sum_{i=1}^{m} \left( \prod_{j=1}^{i-1} q_j^{i-j} \right) (1+\delta_i) z^i \right\}$$
where
$$q_j := \exp \left(- \int_{D_j} \omega \right) \textrm{ for } j = 1, \ldots, m-1$$
are parameters recording symplectic areas of the compact irreducible toric divisors $D_1,\cdots,D_{m-1} \subset X$, and
$$ \delta_i := \sum_{\alpha \not= 0} n^{\mathbf{T}}_{\beta_i + \alpha} \exp\left(- \int_\alpha \omega \right) \textrm{ for } i = 1, \ldots, m-1$$
are `correction' terms in which the summation is over all $\alpha \in H_2 (X, \integer) - \{0\}$ represented by rational curves, $n^{\mathbf{T}}_{\beta_i + \alpha}$ are the open Gromov-Witten invariants of a Lagrangian toric fiber $\mathbf{T} \subset X$ for the disk classes $\beta_i + \alpha \in \pi_2 (X, \mathbf{T})$ , and $\delta_{m}$ is $0$.
\end{theorem}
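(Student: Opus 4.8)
The plan is to obtain the statement as the $\dim_\cpx X = 2$ specialization of Theorem 4.38 of \cite{CLL}, made fully explicit by means of the classification $X = X_{\Sigma_m}$ and the disk-counting input of Proposition~\ref{Cho-Oh}. I would proceed in two main steps. \textbf{Step 1 (recalling the construction).} One equips $X = X_{\Sigma_m}$ with the Lagrangian torus fibration $\rho \colon X \to B$ of \cite{CLL}, built from the moment map of the subtorus of $\mathbf{T} = N_\real/N$ preserving the Calabi-Yau function $\exp 2\pi\consti\pairing{\underline{\nu}}{\cdot}$ together with the modulus of a constant shift of that function; for $\dim_\cpx X = 2$ the base $B$ is a real surface and the wall $H \subset B$ over which fibres bound Maslov-index-zero holomorphic disks is a single line, cutting $B$ into two chambers. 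Over $B\setminus H$ one forms the semi-flat mirror by dualizing the torus fibres; it is a union of two $(\cpx^\times)^2$-charts, and I would take $z \in \cpx^\times$ to be the coordinate dual to the $S^1$-factor of the fibre that stays non-contractible across $H$, while $u$ and $v$ record the holomorphic disks capping off the complementary $S^1$-factor in the two chambers. The two charts are glued by the wall-crossing transformation of \cite{auroux07,auroux09}, whose gluing factor is the generating function $g(z) = \sum_{\beta} n^{\mathbf{T}}_\beta\, Z_\beta$ over Maslov-index-two classes $\beta \in \pi_2(X,\mathbf{T})$ bounded by a fibre on $H$, with $Z_\beta$ the weight $\exp(-\int_\beta\omega)$ twisted by holonomy; the glued space is precisely $\{uv = g(z)\} \subset \cpx^\times\times\cpx^2$. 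What remains is to bring $g(z)$ into the stated form, and this is where being a surface does the work.

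\textbf{Step 2 (computing $g(z)$).} By Proposition~\ref{Cho-Oh} the only classes that contribute are the $m+1$ basic disk classes $\beta_0,\dots,\beta_m$ dual to the rays $v_i = (i,1)$, each with $n^{\mathbf{T}}_{\beta_i} = 1$, together with their modifications $\beta_i + \alpha$ for $i \in \{1,\dots,m-1\}$ and $\alpha \in H_2(X,\integer)\setminus\{0\}$ a class of rational curves; since $X$ is Calabi-Yau the latter are exactly the nonzero effective combinations of the compact divisors $D_1,\dots,D_{m-1}$, which generate $H_2(X,\integer)$. Next I would compute the weights: the ray relations $v_{i-1} - 2v_i + v_{i+1} = 0$ of $\Sigma_m$ translate into relations $\beta_{i-1} - 2\beta_i + \beta_{i+1} = [D_i]$ in $\pi_2(X,\mathbf{T})$ for $1 \le i \le m-1$, from which one deduces by induction on $i$ the identity $\beta_i = \beta_0 + i(\beta_1-\beta_0) + \sum_{j=1}^{i-1}(i-j)\,[D_j]$; normalizing $z$ so that $Z_{\beta_0} = 1$ and $Z_{\beta_1} = z$ then gives $Z_{\beta_i} = \bigl(\prod_{j=1}^{i-1} q_j^{\,i-j}\bigr)z^i$ with $q_j = \exp(-\int_{D_j}\omega)$. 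Collecting terms, the coefficient of $z^i$ becomes $\bigl(\prod_{j=1}^{i-1}q_j^{\,i-j}\bigr)(1 + \delta_i)$ with $\delta_i = \sum_{\alpha\neq 0} n^{\mathbf{T}}_{\beta_i+\alpha}\exp(-\int_\alpha\omega)$, while the $i=0$ term is simply $1$; since $D_0$ and $D_m$ are non-compact, Proposition~\ref{Cho-Oh} moreover forces $\delta_0 = \delta_m = 0$. Hence $g(z) = 1 + \sum_{i=1}^{m}\bigl(\prod_{j=1}^{i-1}q_j^{\,i-j}\bigr)(1+\delta_i)\,z^i$, so $\{uv = g(z)\}$ is the asserted $\check X$; convergence of each $\delta_i$ near the large-volume limit is the estimate already carried out in \cite{CLL}.

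The step I expect to be the main obstacle is the bookkeeping inside Step~2: pinning down the relation $\beta_i = \beta_0 + i(\beta_1-\beta_0) + \sum_{j=1}^{i-1}(i-j)[D_j]$ from the combinatorics of $\Sigma_m$ with the correct signs and orientations for symplectic areas and holonomies, and checking that the wall-crossing map of \cite{auroux07,auroux09} glues the two semi-flat charts into $\{uv = g(z)\}$ on the nose rather than into a coordinate twist of it. Everything else is a faithful transcription of Theorem 4.38 of \cite{CLL} to $\dim_\cpx X = 2$; being Calabi-Yau of complex dimension two is exactly what makes $B$ two-dimensional and $H$ a single line, so that the mirror takes the shape of a conic bundle $uv = g(z)$ over the one-dimensional base $\cpx^\times$.
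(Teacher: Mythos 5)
Your proposal is correct and follows essentially the same route as the paper, which treats this statement as the $\dim_{\cpx}X=2$ specialization of Theorem 4.38 of \cite{CLL} and only sketches the construction (Gross fibration, wall-crossing of $n^{F_r}_\beta$, gluing the two semi-flat charts by the superpotential) in Section \ref{SYZ}, deferring details to \cite{CLL}. Your Step 2 bookkeeping, via the relations $\beta_{i-1}-2\beta_i+\beta_{i+1}=[D_i]$ giving $Z_{\beta_i}/Z_{\beta_0}=\bigl(\prod_{j=1}^{i-1}q_j^{\,i-j}\bigr)z^i$ and the vanishing $\delta_0=\delta_m=0$, correctly reproduces the stated coefficients that the paper simply quotes.
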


\begin{remark}
After the open Gromov-Witten invariants $n^{\mathbf{T}}_{\beta_i + \alpha}$ are computed explicitly, we'll see (in Corollary \ref{def_eq}) that the defining equation of $\check{X}$ is simply
$$ uv = (1+z)(1+q_1 z)(1+q_1 q_2 z) \ldots (1 + q_1 \ldots q_{m-1} z). $$
Thus the mirror $\check{X}$ is a smoothing of the $A_{m-1}$ singularity $\cpx^2 / \integer_{m}$.  As $X$ degenerates to $\cpx^2 / \integer_{m}$, $q_j \to 1$ for all $j = 1, \ldots, m-1$, and so the mirror $\check{X}$ deforms to
$$\cpx^2 / \integer_{m} \cong \{ uv = (1+z)^{m} \}.$$
This class of mirror manifolds has already been investigated by Hosono \cite{hosono06} from the physical point of view, and we arrive at the same conclusion from the SYZ construction.
\end{remark}

In this section we give a very brief description to the SYZ mirror construction specialized to two-dimensional toric Calabi-Yaus.  The readers are referred to \cite{CLL} for details in all dimensions.

\subsection{T-duality} \label{T-duality}

The SYZ approach \cite{syz96} proposed that mirror symmetry is done by taking dual torus fibrations.  To do this we need a Lagrangian torus fibration over $X = X_{\Sigma_m}$, and this has been written down by Gross \cite{gross_examples}:
$$ \mu = ([\mu_0], |w - K| - K): X \to \frac{\real^2}{\real\langle (0,1) \rangle} \times \real \cong \real^2 $$
where $K \in \real_+$, $w$ is a holomorphic function on $X$ locally written as $\zeta_1 \zeta_2$ on each toric affine coordinate patch $\mathrm{Spec}(\cpx[\zeta_1, \zeta_2])$, and $\mu_0: X \to P \subset \real^2$ is the moment map.

The image of $\mu$ is the closed upper half plane $B = \real \times \real_{\geq -K}$.  The discriminant loci of $\mu$ consist of $\partial B = \real \times \{-K\}$ and isolated points $Q_i = ([T_{i-1,i}], 0) \in B$ for $i = 1, \ldots, m$, where each $T_{i-1,i}$ is a vertex of $P$ adjacent to the edges $T_{i-1}$ and $T_{i}$. (See Figure \ref{A_n base}.)

\begin{figure}[htp]
\caption{The base of $\mu$.} \label{A_n base}
\begin{center}
\includegraphics[height=140pt,width=340pt]{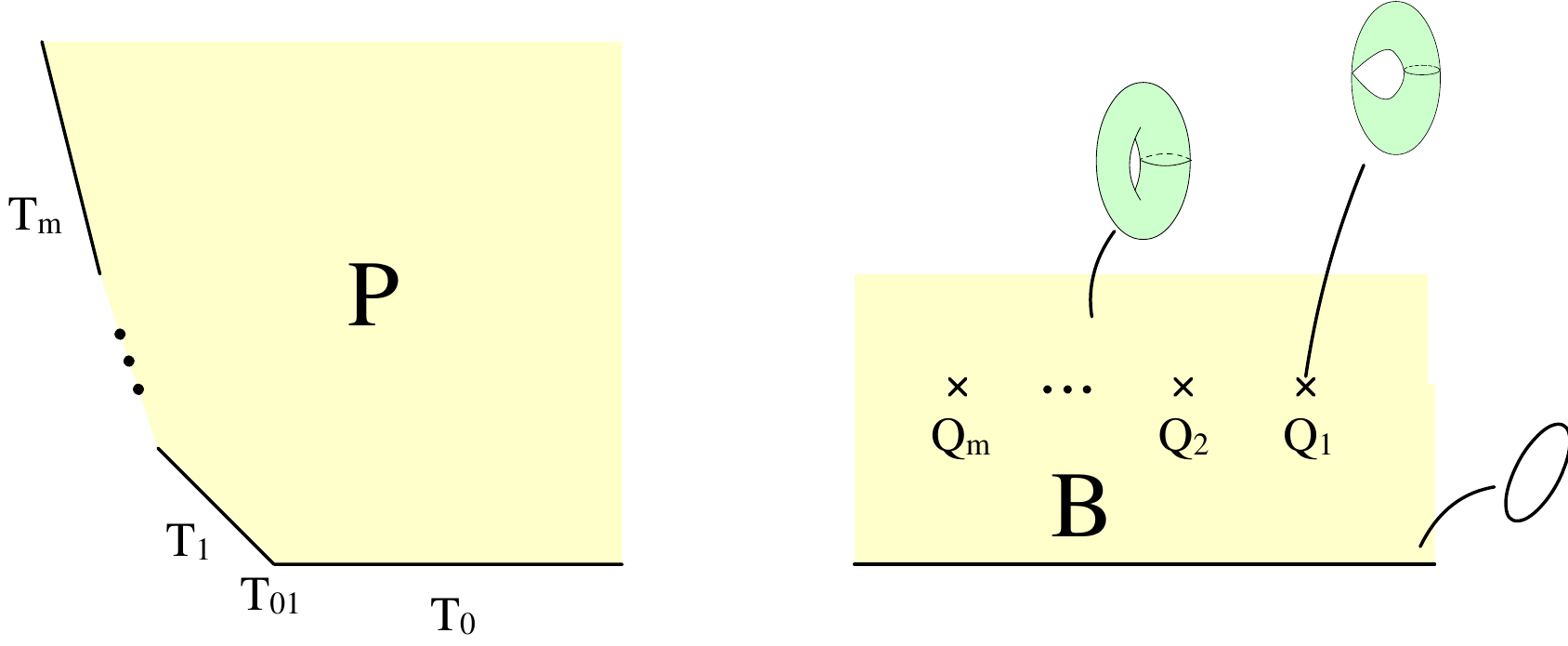}
\end{center}
\end{figure}

Let
$$ B_0 = \real \times \real_{> -K} - \{ Q_1, \ldots, Q_{m} \} $$
be the complement of discriminant loci in $B$.  The fiber of $\mu$ at $r \in B_0$ is denoted as $F_r$.  Away from the discriminant loci one may take the dual torus bundle:
$$\check{\mu}: \check{X}_0 := \big\{ (F_r, \conn): r \in B_0, \conn \textrm{ is a flat $U(1)$-connection on $F_r$} \big\} \to B_0$$
which is referred as the semi-flat mirror \cite{boss01}.  $\check{X}_0$ has semi-flat complex coordinates $(z_1, z_2)$: Let the coordinates of $Q_1$ be $(a,0)$ and 
$$U = B_0 - \{(r_1, 0) \in B_0: r_1 \leq a \}$$
which is a contractible open set in $B_0$ as shown in Figure \ref{A_n_disks}, and  $\lambda_i \in \pi_1(F_r)$ ($i = 1, 2$) are represented by the boundaries of the two disks $\Delta_i$ as shown in the diagram.  Then for $(F_r, \conn) \in \check{\mu}^{-1} (U)$,
$$ z_i (F_r, \conn) := \exp \left( -\int_{\Delta_i (r)} \omega \right) \mathrm{Hol}_{\conn} (\lambda_i).$$

\begin{figure}[htp]
\caption{The disks $\Delta_1$ and $\Delta_2$.} \label{A_n_disks}
\begin{center}
\includegraphics[height=93pt, width=199pt]{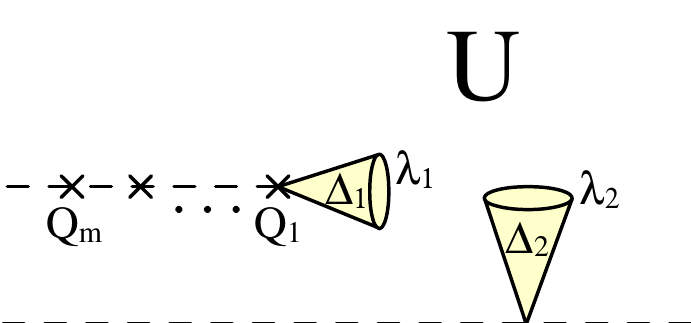}
\end{center}
\end{figure}

The above construction of semi-flat mirror complex manifolds has been discussed in a lot of literatures such as \cite{boss01}, and it is proposed that the semi-flat complex structure has to be corrected for compactifications \cite{gross07}.  The following section gives a brief review on these quantum corrections, which have been carried out in detail for general toric Calabi-Yaus in \cite{CLL}.

\subsection{Wall-crossing and the mirror complex coordinates} \label{mir_cpx_coord}

An essential ingredient of quantum corrections is the open Gromov-Witten invariant $n_\beta^{F_r}$, which exhibits the wall-crossing phenomenon in the sense of Auroux \cite{auroux07,auroux09} (Various examples such as $\cpx^2$, $\cpx^3$ and the Hirzebruch surface $\mathrm{F}_2$ have been discussed by Auroux to explain this wall-crossing phenomenon):

\begin{prop}[(see Section 4.5 of \cite{CLL} for the precise statement and proof)]

Let $X = X_{\Sigma_m}$ be a toric Calabi-Yau surface, and $H := \real \times \{0\} \subset B$ which is referred as `the wall'.  Write $B_0 - H = B_+ \cup B_-$.

\noindent For $r \in B_+$, $$n^{F_r}_\beta = n^{\mathbf{T}}_\beta$$ for all $\beta \in \pi_2(X,F_r)$, where $\mathbf{T} \subset X$ is a Lagrangian toric fiber.  

\noindent On the other hand, for $r \in B_-$, $n^{F_r}_\beta = 0$ for all $\beta$ except only one class $\beta_0$, and $n^{F_r}_{\beta_0} = 1$.

\end{prop}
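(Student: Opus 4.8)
The plan is to classify, for each $r \in B_0$, the Maslov index two holomorphic disks bounded by $F_r$ --- these are the only classes that can contribute to $n^{F_r}_\beta$ --- by means of the holomorphic function $w$ entering the definition of the Gross fibration. Observe first that $w$ is the $N_\cpx/N$-invariant function $\exp 2\pi\consti\pairing{\underline{\nu}}{\cdot}$ cutting out the anticanonical divisor $D := \sum_{j=0}^{m} D_j$, with a simple zero along each $D_j$ (because $\pairing{\underline{\nu}}{v_j} = 1$); hence $X \setminus D$ is the open orbit $(\cpx^\times)^2$ and $w$ restricts there to a character $(\cpx^\times)^2 \to \cpx^\times$ whose fibres are copies of $\cpx^\times$. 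For $r = (r_1, r_2) \in B_0$ the set $w(F_r)$ is the circle $C_r := \{\zeta : |\zeta - K| = r_2 + K\}$ of radius $\rho := r_2 + K > 0$ about $K$, and the crucial dichotomy is whether $C_r$ encircles the critical value $0$ of $w$: for $r \in B_-$ one has $\rho < K$ and $0$ lies outside $C_r$, whereas for $r \in B_+$ one has $\rho > K$ and $0$ lies inside $C_r$.

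\textbf{The side $B_-$.} Since $\rho < K$, both $C_r$ and the closed disk it bounds in $\cpx$ avoid $0$, so $F_r$ lies inside $V_- := w^{-1}(\{|\zeta - K| < K\}) \subset (\cpx^\times)^2$. For any holomorphic disk $u \colon (\Delta, \partial\Delta) \to (X, F_r)$, the maximum principle applied to the holomorphic function $w\circ u - K$ confines $w\circ u$ to the closed sub-disk $\{|\zeta - K| \le \rho\}$, which still avoids $0$; in particular $w \circ u$ is a rescaled Blaschke product of some degree $d \ge 0$, and a short computation with the holomorphic volume form $\Omega = \frac{\der w}{w}\wedge\frac{\der u_2}{u_2}$ (here $w, u_2$ being a basis of characters of $(\cpx^\times)^2$) shows that the Maslov index of $u$ equals $2d$, because the winding of the phase of $\Omega|_{F_r}$ along $C_r$ is $1$ precisely since $C_r$ does not wind around $0$. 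A disk with $d = 0$ is forced to be constant, and the homotopy class of $u$ depends only on $d$; hence $d = 1$ is the only Maslov two possibility, giving a single class $\beta_0$. For $\beta_0$ there is no bubbling --- the $w$-image of any limit configuration still lies in $\{|\zeta-K|\le\rho\}$, a region inside $(\cpx^\times)^2$ which carries no nonconstant holomorphic spheres, and no nonconstant Maslov $\le 0$ disk can split off --- the relevant linearised operator is surjective, and evaluation at the marked point identifies $\moduli_1(F_r, \beta_0)$ diffeomorphically with $F_r$. Therefore $n^{F_r}_{\beta_0} = 1$ and $n^{F_r}_\beta = 0$ for every other $\beta$.

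\textbf{The side $B_+$.} Now $\rho > K$, the closed disk bounded by $C_r$ contains the critical value $0$, and $w\circ u$ may vanish, so holomorphic disks bounded by $F_r$ can meet the toric divisors $D_j$ and the elementary argument above breaks down. Instead I would produce a Lagrangian isotopy from $F_r$ to a Lagrangian toric fibre $\mathbf{T} \subset X$ lying entirely over $B_+$, hence disjoint from the wall $H$ and from the discriminant loci $\partial B$ and $Q_1, \dots, Q_m$; along such an isotopy no wall is crossed and no relevant bubbling arises, so the open Gromov--Witten invariants are unchanged, $n^{F_r}_\beta = n^{\mathbf{T}}_\beta$ for all $\beta$, and Proposition \ref{Cho-Oh} identifies the latter. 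The delicate point is to justify the invariance rigorously: $F_r$ is not a toric fibre, and $n_\beta$ is preserved only under isotopies with controlled bubbling, so one must either track the moduli spaces along the isotopy or --- as in Section 4.5 of \cite{CLL} --- compare both $F_r$ and $\mathbf{T}$ directly against the toric model. This $B_+$ case, rather than the combinatorially transparent side $B_-$, is where I expect the real work to lie.
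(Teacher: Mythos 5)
Your analysis on the side $B_-$ is essentially the right (and standard) argument, and it is the same maximum-principle/Blaschke-product mechanism used in \cite{CLL}, Section 4.5: since the circle $C_r$ does not enclose $0$, every holomorphic disk avoids $w^{-1}(0)=\bigcup_j D_j$, the Maslov index is twice the degree $d$ of $w\circ u-K$, and the degree-one disks form a single regular torus family with $n_{\beta_0}=1$. Two points deserve tightening: the form $\frac{\der w}{w}\wedge\frac{\der u_2}{u_2}$ is \emph{not} the Calabi--Yau volume form of $X$ (it has logarithmic poles along the toric divisors; on the open orbit the correct form is $\der w\wedge \der\log u_2$) --- your index count survives only because the disks are confined to the open orbit, where the logarithmic form is still nowhere vanishing; and ``the homotopy class of $u$ depends only on $d$'' should be argued by noting that the disk is disjoint from every toric divisor, so all its intersection numbers with the $D_j$ vanish, which pins down its class inside $\beta_0+H_2(X)$, while the absence of sphere bubbles uses that every rational curve of $X$ lies in $w^{-1}(0)$, a region your disks never meet.

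The genuine gap is the case $B_+$, which is the half the paper actually needs (it is what lets Theorem \ref{Thm_openGW} feed into the mirror formula), and you have only sketched it. Invariance of $n_\beta$ under a Lagrangian, non-Hamiltonian isotopy is precisely what \emph{fails} in wall-crossing, so ``no wall is crossed, hence the open Gromov--Witten invariants are unchanged'' is not a citable theorem: one must either prove a cobordism/pseudo-isotopy statement for the Fukaya--Oh--Ohta--Ono invariants along an explicit family of Lagrangian tori none of which bounds a nonconstant disk of Maslov index $\leq 0$, or classify the Maslov-two configurations on the $B_+$ side directly, which reintroduces exactly the obstructed sphere-bubble configurations you were trying to avoid. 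Moreover the framing is slightly off: a moment-map fiber lies in $\mu^{-1}(B_+)$ only if $|w|>2K$ on it, and the natural interpolating family (for instance the tori with $|w-c|$ and the $\mu_0$-level fixed, letting the center $c$ move from $K$ to $0$, which terminates exactly at a toric fiber) does \emph{not} stay inside $\mu^{-1}(B_+)$; the relevant condition along the family is the absence of Maslov index $\leq 0$ disks, not that its image avoids $H$. Note also that the paper itself offers no proof of this proposition --- it defers exactly this comparison to \cite{CLL}, Section 4.5 --- so the missing invariance argument is the substance of the statement, and as it stands your proposal does not establish it.
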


The term `wall-crossing' refers to the phenomenon that $n^{F_r}_\beta$ jumps as $r$ crosses the wall $H$.  As a consequence, the superpotential $W$, which is a function on the semi-flat mirror $\check{X}_0$ defined by
$$ W(F_r, \conn) := \sum_{\beta \in \pi_2(X,F_r)} n^{F_r}_\beta \exp\left(-\int_\beta \omega\right) \mathrm{Hol}_\conn (\partial \beta), $$
also jumps when $r$ crosses the wall $H$.  To remedy this, the crucial idea is \emph{to use $W$ and $z_1$ as the mirror coordinate functions.} (In general Fourier transform of generating functions counting stable disks emanating from boundary divisors should be used as the mirror coordinates.)  After some computations (see \cite{CLL} for details) one sees that the mirror is of the form
$$\check{X} := \left\{(z,u,v) \in \cpx^\times \times \cpx^2: uv = g(z) \right\}$$
which is glued by two semi-flat pieces $\check{X}_+ = \check{X}_- = \cpx^\times \times \cpx$ (which contain $\check{\mu}^{-1}(B_\pm)$ respectively), where the coordinate charts are given by
$\iota_+: \check{X}_+ \to \check{X}$,
\begin{equation} \label{iota+}
\iota_+ (z_1, z_2) =  (z_1, z_2 g(z_1) ,z_2^{-1})
\end{equation}
and $\iota_-: \check{X}_- \to \check{X}$,
\begin{equation} \label{iota-}
\iota_- (z_1, z_2) =  (z_1, z_2 ,z_2^{-1} g (z_1)).
\end{equation}
More explicitly,
$$ g(z) := 1 + \sum_{i=1}^{m} \left( \prod_{j=1}^{i-1} q_j^{i-j} \right) (1+\delta_i) z^i $$ 
is the `gluing function' in the sense of Gross and Siebert \cite{gross07}, where
$$q_j := \exp \left(- \int_{D_j} \omega \right) \textrm{ for } j = 1, \ldots, m-1$$
and
$$ \delta_i := \sum_{\alpha \not= 0} n^{\mathbf{T}}_{\beta_i + \alpha} \exp\left(- \int_\alpha \omega \right) \textrm{ for } i = 1, \ldots, m.$$

With this correction the superpotential $W$, which takes values $z_2 g(z_1)$ on $\check{X}_+$ and $z_2$ on $\check{X}_-$, glues up to give the holomorphic function $u$ on the mirror $\check{X}$.

We see that in order to write down $\check{X}$ explicitly, one needs to compute the open Gromov-Witten invariants $n^{\mathbf{T}}_{\beta_i + \alpha}$, and this will be done in Section \ref{open_GW}.

\section{The mirror map is SYZ map} \label{main_section}
By the SYZ construction explained in the last section, each toric Calabi-Yau surface $(X, \omega)$ is associated with a complex surface $\check{X}$.  We call this to be the SYZ map which is a map from the K\"ahler moduli of $X$ to the complex moduli of $\check{X}$.  Now comes a crucial question: Does the SYZ map give the mirror map (Conjecture 5.1 of \cite{CLL})?

The mirror map is a local isomorphism between the K\"ahler moduli of $X$ and the complex moduli of $\check{X}$ such that it pulls back canonical coordinates on the complex moduli to canonical K\"ahler coordinates on the K\"ahler moduli.  (We recall that canonical K\"ahler coordinates are given by the symplectic areas of two-cycles in $X$, and canonical complex coordinates are given by the periods of $\check{X}$.)  In Hori-Vafa recipe, the mirror family is
$$\check{X}_{C_0, \ldots, C_{m}} = \left\{(z,u,v) \in \cpx^\times \times \cpx^2 : uv = \sum_{i=0}^{m} C_i z^i \right\}$$
where $C_i \in \cpx$ for $i = 0, \ldots, m$.  Then the mirror map is a function $(C_0 (q), \ldots, C_{m} (q))$ which maps the K\"ahler cone of $X$ to $\cpx^{m+1}$, such that the periods of $\check{X}_{C_0 (q), \ldots, C_{m}(q)}$ coincides with the symplectic areas of two cycles in $X$.

The aim of this section is to give an affirmative answer to this question when $X$ is a toric Calabi-Yau surface:

\begin{theorem} \label{can_coord}
Let $X = X_{\Sigma_m}$ be a toric Calabi-Yau surface, and
$$\check{X} := \left\{(z,u,v) \in \cpx^\times \times \cpx^2 : uv = 1 + \sum_{i=1}^{m} \left( \prod_{j=1}^{i-1} q_j^{i-j} \right) (1+\delta_i) z^i \right\}$$
be the mirror as stated in Theorem \ref{mirror theorem}.  Then the SYZ construction gives a holomorphic volume form $\check{\Omega}$ on $\check{X}$, together with a canonical isomorphism
$$ H_2 (X, \integer) \cong H_2 (\check{X}, \integer) $$
which maps the basis $\{\theta_j := [D_j]\}_{j=1}^{m-1}$ of $H_2(X,\integer)$ to a basis $\{\check{\Theta}_j\}_{j=1}^{m-1}$ of $H_2(\check{X},\integer)$ such that
\begin{equation} \label{eq_can_coord}
-\int_{\theta_j}\omega = \int_{\check{\Theta}_j}\check{\Omega}
\end{equation}
for all $j=1, \ldots, m-1$.
\end{theorem}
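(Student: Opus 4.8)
The plan is to compute both sides of \eqref{eq_can_coord} explicitly and show they agree, once the open Gromov-Witten invariants $n^{\mathbf{T}}_{\beta_i+\alpha}$ (hence the $\delta_i$) have been pinned down as promised in Section \ref{open_GW}. Concretely, I would first establish (via the computation in Section \ref{open_GW}, using the Bryan-Leung technique applied to a toric compactification $\bar X$ of $X$ as in \cite{bryan-leung00,Chan10,LLW10}) that $1+\delta_i = \prod_{j=1}^{i-1}(\text{suitable product of }q_j)$ in just the right way that the gluing function collapses to the factored form
$$g(z) = (1+z)(1+q_1 z)(1+q_1 q_2 z)\cdots(1+q_1\cdots q_{m-1}z),$$
as anticipated in the remark following Theorem \ref{mirror theorem}. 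This is the key input and, I expect, the main obstacle: controlling the potentially obstructed moduli spaces $\mathcal{M}_1(\mathbf{T},\beta_i+\alpha)$ for $X$ non-Fano and matching the resulting multiple-cover-type sums against the Bryan-Leung formula so that the binomial-style identities close up. Everything downstream is comparatively formal.

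Granting the factored form of $g$, the next step is to extract the holomorphic volume form $\check\Omega$ on $\check X$ from the SYZ construction. On the chart $\check X_+ \cong \cpx^\times\times\cpx$ with coordinates $(z_1,z_2)$ and $u = z_2 g(z_1)$, $v=z_2^{-1}$, the natural SYZ holomorphic volume form is (up to an overall constant coming from the conifold-type residue) $\check\Omega = \frac{\der z_1}{z_1}\wedge \der z_2$, equivalently $\check\Omega = \operatorname{Res}\bigl(\frac{\der z\wedge\der u\wedge\der v}{z(uv-g(z))}\bigr)$; using \eqref{iota+} and \eqref{iota-} one checks this is independent of the chart, so it is globally defined on $\check X$. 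The point is that $\check X$ is, by the factored form of $g$, a smoothing of the $A_{m-1}$ surface singularity $\cpx^2/\integer_m$, and its middle homology $H_2(\check X,\integer)$ is freely generated by the $m-1$ vanishing cycles $\check\Theta_1,\dots,\check\Theta_{m-1}$, where $\check\Theta_j$ is the Lefschetz thimble joining the $j$-th and $(j{+}1)$-st roots of $g(z)=0$, namely $z = -1/(q_1\cdots q_{j-1})$ and $z = -1/(q_1\cdots q_j)$ (with the convention that the empty product is $1$). I would define the isomorphism $H_2(X,\integer)\cong H_2(\check X,\integer)$ by $\theta_j = [D_j] \mapsto \check\Theta_j$; that this is an isomorphism of lattices is immediate since both sides are $\integer^{m-1}$ with these as bases, and it is "canonical" in the sense that it is dictated by the SYZ identification of the wall-crossing chambers $B_\pm$ and the positions $Q_i$ in the base $B$.

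The final step is the period computation. Fix $j$ and integrate $\check\Omega$ over $\check\Theta_j$: parametrizing the thimble as the $S^1$-bundle of unit-modulus $z_2$ over the segment in the $z_1$-plane from $-1/(q_1\cdots q_{j-1})$ to $-1/(q_1\cdots q_j)$ inside $\{|z_1 z_2|\le 1\}$, the $\der z_2$ integral over the circle produces a $2\pi\consti$, and one is left with
$$\int_{\check\Theta_j}\check\Omega \;=\; \log\!\left(\frac{q_1\cdots q_{j-1}}{q_1\cdots q_j}\right) \;=\; -\log q_j \;=\; \int_{D_j}\omega,$$
using $q_j = \exp(-\int_{D_j}\omega)$. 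Comparing with the canonical K\"ahler coordinate $-\int_{\theta_j}\omega = -\int_{D_j}\omega$ gives exactly \eqref{eq_can_coord}, after absorbing the $2\pi\consti$ into the normalization of $\check\Omega$ (consistent with the normalization already used for the semi-flat coordinates $z_i$ in Section \ref{T-duality}). Here the only care needed is the sign/orientation bookkeeping: one must orient $\check\Theta_j$ compatibly with the chosen ordering of the roots of $g$ so that the period comes out with the correct sign to match $-\int_{\theta_j}\omega$ rather than its negative, and this is forced by the clockwise labelling of the rays $v_i=(i,1)$ and the corresponding ordering of the points $Q_i$ in Figure \ref{A_n base}.
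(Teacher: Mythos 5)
Your overall strategy is the paper's: pin down the open Gromov--Witten invariants via the open/closed comparison and Bryan--Leung so that $g$ factors, take $\check{\Omega}$ to be the logarithmic (residue) volume form glued over the two semi-flat charts, identify $H_2(\check{X},\integer)$ with the lattice of vanishing cycles over the segments joining consecutive roots of $g(z)=0$, and compute each period as $\int \der\log z$ over such a segment, yielding $\log q_j$. One small but consequential slip: on the chart $\check{X}_+$ the form is $\der\log z_1\wedge\der\log z_2$, not $\tfrac{\der z_1}{z_1}\wedge\der z_2$ --- with the latter the integral of $\der z_2$ over the unit circle would vanish rather than produce $2\pi\consti$. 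Your residue expression is the correct one and agrees with the paper's $\der\log z\wedge\der\log u$.

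The more substantive issue is the construction of the isomorphism $H_2(X,\integer)\cong H_2(\check{X},\integer)$. The theorem asserts that the \emph{SYZ construction} produces it, whereas you define it by decreeing that the $j$-th divisor class maps to the $j$-th vanishing cycle and gesture at canonicity. This is exactly where the paper does nontrivial work: it writes $D_l$ as a circle fibration over the edge $[T_l]\times\{0\}$ of the base $B$, T-dualizes fiberwise to obtain a dual chain $\check{D}_l\subset\check{X}_-$ whose two boundary circles sit over the roots $-\prod_{i=1}^{l-1}q_i^{-1}$ and $-\prod_{i=1}^{l}q_i^{-1}$, and then shows $\iota_-(\check{D}_l)$ is homologous rel boundary to a chain that closes up to the cycle $S_l=\{|u|=|v|\}$ over that segment in the limit $K\to+\infty$. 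Besides making the isomorphism genuinely canonical, this derivation fixes the orientation of $\check{\Theta}_l$, which is precisely what your argument leaves open: as written your period comes out as $-\log q_j=+\int_{D_j}\omega$, the negative of what \eqref{eq_can_coord} requires, and you can only appeal to an unspecified orientation convention to repair the sign. So the proposal is right in outline, but the step producing the cycles $\check{\Theta}_j$ from the SYZ transform of the divisors $D_j$ is a genuine gap that needs the above argument (or an equivalent one) to be filled.
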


Since the mirror map is the SYZ map, we have the expressions
$$ C_i = \left( \prod_{j=1}^{i-1} q_j^{i-j} \right) (1+\delta_i) = \left( \prod_{j=1}^{i-1} q_j^{i-j} \right) \left(\sum_{\alpha} n_{\beta_i + \alpha} q^\alpha \right).$$
Thus the coefficients of the mirror map, when expanded in K\"ahler parameters $q_i$, are open Gromov-Witten invariants.  This gives a geometric understanding of the mirror map.

To prove this theorem, we need to compute the coefficients
$$ \delta_i = \sum_{\alpha \not= 0} n_{\beta_i + \alpha} \exp\left(- \int_\alpha \omega \right) $$
which involve the open Gromov-Witten invariants.  This is done in Section \ref{open_GW}.  Then in Section \ref{proof} we'll prove Theorem \ref{can_coord}.  This includes writing down the holomorphic volume form on $\check{X}$ via SYZ (this is already contained in Section 4.6 of \cite{CLL}), constructing the isomorphism $ H_2 (X, \integer) \cong H_2 (\check{X}, \integer)$, and computing the periods of $\check{X}$.

\subsection{Open Gromov-Witten invariants of toric CY surfaces} \label{open_GW}

In this section we would like to compute the open Gromov-Witten invariants $n_\beta^{\mathrm{T}}$ for a toric fiber $\mathbf{T}$ of a toric Calabi-Yau surface $X_{\Sigma_m}$.  By Proposition \ref{Cho-Oh} it suffices to compute $n_\beta$ for $\beta = \beta_l + \alpha$ where $l \in \{1, \ldots, m-1\}$ and $\alpha \in H_2(X) - \{0\}$.  The result is:

\begin{theorem} \label{Thm_openGW}
Let $X = X_{\Sigma_m}$ be a toric Calabi-Yau surface, $\mathbf{T}$ be a Lagrangian toric fiber, and $\beta = \beta_l + \alpha \in \pi_2(X, \mathbf{T})$ where $\beta_l$ is a basic disk class for $l \in \{1, \ldots, m-1\}$ and $\alpha \in H_2(X)$.  Writing
$$ \alpha = \sum_{k=1}^{m-1} s_k [D_k] $$
where $D_k$ are irreducible compact toric divisors of $X$ and $s_k \in \integer$, then $n_\beta$ equals to $1$ when $\{s_k\}_{k=1}^{m-1}$ is admissible with center $l$, and $0$ otherwise.  A sequence $\{s_k\}_{k=1}^{m-1}$ of integers is said to be admissible with center $l$ if
\begin{enumerate}
\item $s_k \geq 0$ for all $k = 1, \ldots, m-1$.
\item $s_i \leq s_{i+1} \leq s_i + 1$ when $i < l$;
\item $s_i\ge s_{i+1} \ge s_i-1$ when $i \ge l$;
\item $s_{1}, s_{m-1} \leq 1$.
\end{enumerate}
\end{theorem}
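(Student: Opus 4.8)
The plan is to realize each open Gromov--Witten invariant $n_{\beta_l+\alpha}^{\mathbf{T}}$ as a closed Gromov--Witten invariant of a compactification $\bar{X}$, and then to read off the answer from the Bryan--Leung computation \cite{bryan-leung00}. First I would fix a toric compactification $\bar{X}$ of $X = X_{\Sigma_m}$ by adding the ray generated by $\underline\nu$ (or rather a suitable ray so that the total space becomes, after a base change, an elliptically fibered rational surface of the kind appearing in Bryan--Leung); the key point is that the toric boundary divisor $D_\infty$ added at infinity meets the Lagrangian toric fiber $\mathbf{T}$ so that a Maslov-index-two disk in class $\beta_l+\alpha$ in $X$ caps off, via a unique holomorphic disk hitting $D_\infty$ once, to a genuine rational curve $C$ in $\bar{X}$ meeting $D_\infty$ transversally at one point. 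This is the generalization (following Chan \cite{Chan10} and as used in \cite{LLW10}) of the open/closed correspondence: after choosing the almost complex structure appropriately near $D_\infty$, the moduli space $\mathcal{M}_1(\mathbf{T},\beta_l+\alpha)$ is identified with a moduli space of rational curves in $\bar{X}$ with prescribed tangency to $D_\infty$, so that $n_{\beta_l+\alpha}^{\mathbf{T}}$ equals a relative (or an ordinary, after the usual argument that the tangency condition is automatic) genus-zero closed GW invariant of $\bar{X}$.

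The second step is to identify that closed invariant inside the Bryan--Leung framework. Bryan--Leung compute, for a rationally elliptic surface, the genus-zero invariants counting rational curves in classes of the form (section) $+ \sum s_k (\text{fiber components})$, and they show such a count is $1$ precisely when the curve can be assembled as a connected nodal chain whose components are the section and fibers of the elliptic fibration over specified points, and $0$ otherwise. Translating the class $\beta_l + \alpha$ with $\alpha = \sum_{k=1}^{m-1} s_k[D_k]$ into that language: the basic disk $\beta_l$ caps off to the section-like curve through the vertex region near $D_l$, and the $[D_k]$ are exactly the fiber components of the elliptic fibration on $\bar X$. The admissibility conditions (1)--(4) should then be exactly the combinatorial conditions under which such a connected nodal configuration of the prescribed homology type exists and is rigid: condition (1) is effectivity, conditions (2) and (3) say that the multiplicities $s_k$ increase by steps of at most one toward the center $l$ and decrease by steps of at most one away from it (this is the condition that the chain is connected and that each successive fiber component can be attached), and condition (4) reflects that the two end components $D_1$ and $D_{m-1}$ are $(-2)$-curves adjacent to the noncompact boundary, so at most one copy of each can appear in a rigid chain. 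Where the configuration exists it is unique and has the expected obstruction behavior giving contribution $1$; where it does not the invariant vanishes.

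The main obstacle I expect is the first step done carefully: justifying that $\mathcal{M}_1(\mathbf{T}, \beta_l+\alpha)$, which is genuinely obstructed in the non-Fano case (as noted after Proposition \ref{Cho-Oh}), matches the closed moduli space \emph{including virtual/obstruction data}, i.e.\ that the open virtual count equals the closed virtual count and not merely that the underlying sets agree. This requires (a) choosing a torus-invariant almost complex structure and perturbation compatible on both sides, (b) controlling the behavior of stable disks near the boundary divisor $D_\infty$ so that the capping disk is unique and transverse, and (c) comparing the two obstruction theories along the identification --- the argument of Chan \cite{Chan10} handles the nef case, and the extension here must accommodate the $(-2)$-curve components, using that these curves are rigid with normal bundle $\mathcal{O}(-2)$ so their contributions are still combinatorially controlled. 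A secondary technical point is the book-keeping that the class $\alpha$ lies in the span of the compact divisors $D_1,\dots,D_{m-1}$ with $K_X\cdot\alpha=0$ (guaranteed by Proposition \ref{Cho-Oh}), so that only the fiber-component classes enter and the Bryan--Leung count applies verbatim. Once the virtual identification is in place, the enumeration of admissible sequences is an elementary combinatorial verification which I would carry out directly.
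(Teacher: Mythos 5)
Your overall strategy is the same as the paper's (compactify toric-ally, transfer the open invariant to a closed one following \cite{Chan10} and \cite{LLW10}, then quote Bryan--Leung \cite{bryan-leung00}), and you correctly flag the comparison of obstruction theories as the main technical burden --- the paper handles exactly this by invoking the Kuranishi-structure comparison of Proposition 4.4 of \cite{LLW10}. However, there is a concrete missing step: the treatment of the point-class insertion. The open invariant $n_{\beta_l+\alpha}$ carries a boundary marked point paired with $[\mathrm{pt}]$, so under the open/closed correspondence it becomes the \emph{one-pointed} invariant $\mathrm{GW}_{0,1}^{\bar{X}, h+\alpha}([\mathrm{pt}])$, where $h$ is the class with $h\cdot D_l = h\cdot D_\infty = 1$ on the compactification obtained by adding the ray $-v_l$ (together with $(\pm 1,0)$ for smoothness); note the compactifying ray depends on the center $l$, not on $\underline{\nu}$. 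Bryan--Leung's counts have no point insertions, so one cannot ``identify that closed invariant inside the Bryan--Leung framework'' directly. The paper bridges this with the result of Hu \cite{hu00} and Gathmann \cite{gathmann}: blowing up $\bar{X}$ at a point gives
$\mathrm{GW}_{0,1}^{\bar{X}, h+\alpha}([\mathrm{pt}]) = \mathrm{GW}_{0,0}^{\tilde{Y}, \pi^{!}(h+\alpha)-e}$,
and only after this blow-up does the class take the form $[C]+\sum_{k} s_k [D_k]$ with $C$ a $(-1)$-curve attached to a chain of $(-2)$-curves, which is precisely the configuration whose zero-pointed invariant Bryan--Leung computed.

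Relatedly, your geometric translation is off: $\bar{X}$ is a toric surface, not an elliptically fibered rational surface, and the compact divisors $D_k$ are not fiber components of any elliptic fibration on it; the relevance of \cite{bryan-leung00} is through their local analysis of the chain-of-$\proj^1$'s configuration (a $(-1)$-curve meeting an $A_{m-1}$-chain of $(-2)$-curves), which arises only in the blown-up surface $\tilde{Y}$. Without the Hu--Gathmann step (or an equivalent mechanism for removing the point constraint --- the divisor axiom does not apply to a point class), the reduction to Bryan--Leung as you describe it does not go through. The final combinatorial identification of the admissibility conditions (1)--(4) is then read off from Bryan--Leung's computation rather than re-derived, so that part of your sketch is fine once the correct closed invariant is in place.
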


As a consequence,
 
\begin{corollary} \label{def_eq}
The defining equation of $\check{X}$ in Theorem \ref{mirror theorem} is
$$uv = (1+z)(1+q_1 z)(1+q_1 q_2 z) \ldots (1 + q_1 \ldots q_{m-1} z) $$
where $$q_j := \exp \left(- \int_{D_j} \omega \right) \textrm{ for } j = 1, \ldots, m-1$$
are the K\"ahler parameters.
\end{corollary}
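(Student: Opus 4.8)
The plan is to substitute the open Gromov-Witten invariants computed in Theorem \ref{Thm_openGW} into the defining equation of $\check{X}$ in Theorem \ref{mirror theorem} and to recognize the resulting polynomial in $z$. Writing $Q_k := \prod_{j=1}^{k} q_j$ for $k = 0, 1, \ldots, m-1$ (so $Q_0 = 1$), what has to be shown is the polynomial identity
$$ g(z) := 1 + \sum_{i=1}^{m}\Big(\prod_{j=1}^{i-1} q_j^{\,i-j}\Big)(1+\delta_i)\, z^i \;=\; \prod_{k=0}^{m-1}\big(1 + Q_k z\big), $$
whose right-hand side is precisely $(1+z)(1+q_1 z)\cdots(1+q_1\cdots q_{m-1} z)$. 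Equivalently, for each $i$ the coefficient $\big(\prod_{j=1}^{i-1}q_j^{\,i-j}\big)(1+\delta_i)$ of $z^i$ in $g$ must equal the elementary symmetric function $e_i(Q_0,\ldots,Q_{m-1})$. The extreme cases are immediate: the constant term is $1 = e_0$, while for $i = m$ one has $\delta_m = 0$ and $\prod_{j=1}^{m-1}q_j^{\,m-j} = Q_0 Q_1\cdots Q_{m-1} = e_m$. So it remains to treat $1 \le i \le m-1$.

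By Theorem \ref{Thm_openGW}, $1 + \delta_i = \sum_{\{s_k\}} \prod_{k=1}^{m-1} q_k^{\,s_k}$, the sum over all integer sequences $\{s_k\}_{k=1}^{m-1}$ admissible with center $i$ (the term $1$ being the all-zero sequence, i.e.\ $\alpha = 0$, $n_{\beta_i} = 1$). Thus the coefficient of $z^i$ in $g(z)$ is $\sum_{\{s_k\}} \prod_{j=1}^{m-1} q_j^{\,(i-j)_+ + s_j}$, again over sequences admissible with center $i$, where $(i-j)_+ := \max(i-j,0)$. On the other side, an $i$-element subset $S = \{k_1 < \cdots < k_i\}$ of $\{0,1,\ldots,m-1\}$ contributes $\prod_{k\in S} Q_k = \prod_{j=1}^{m-1} q_j^{\,c_j(S)}$ to $e_i(Q_0,\ldots,Q_{m-1})$, where $c_j(S) := \#\{k \in S : k \ge j\}$; and a sequence $\{c_j\}_{j=1}^{m-1}$ of non-negative integers equals $\{c_j(S)\}$ for a (then unique) such $S$ if and only if it is weakly decreasing with consecutive differences in $\{0,1\}$, $c_1 \in \{i-1,i\}$, and $c_{m-1} \in \{0,1\}$.

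I would then match the two sums by the substitution $s_j := c_j - (i-j)_+$, under which $\prod_j q_j^{\,(i-j)_+ + s_j} = \prod_j q_j^{\,c_j} = \prod_{k \in S} Q_k$, and verify that $\{c_j\}$ has the four properties just listed exactly when $\{s_j\}$ is admissible with center $i$ in the sense of Theorem \ref{Thm_openGW}. Concretely: since $(i-j)_+ = i-j$ for $j \le i-1$ and $=0$ for $j \ge i$, the condition that consecutive differences of $\{c_j\}$ lie in $\{0,1\}$ becomes $s_j \le s_{j+1} \le s_j+1$ for $j \le i-1$ (condition (2)) and $s_j \ge s_{j+1} \ge s_j-1$ for $j \ge i$ (condition (3)); non-negativity of $\{c_j\}$ amounts to $s_k \ge 0$ for $k \ge i$, and the inequalities $s_k \ge 0$ for $k < i$ are then automatic from $c_1 \ge i-1$ together with the unit-step bound (so condition (1) holds); and $c_1 \in \{i-1,i\}$, $c_{m-1}\in\{0,1\}$ give the endpoint bounds $s_1, s_{m-1} \le 1$ (condition (4)). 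Summing over admissible $\{s_k\}$, equivalently over $i$-subsets $S$, the two $z^i$-coefficients agree, proving $g(z) = \prod_{k=0}^{m-1}(1+Q_k z)$.

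There is no conceptual content left here --- everything substantive lives in Theorem \ref{Thm_openGW} --- so the only thing to watch is the index bookkeeping: the transition near $j = i$, where $(i-j)_+$ ceases to equal $i-j$ and the increasing and decreasing regimes of an admissible sequence meet, and the two endpoint conditions on $c_1$ and $c_{m-1}$; one should also check the degenerate case $i = 1$, where the ``increasing'' regime is empty.
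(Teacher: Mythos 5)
Your proposal is correct and follows essentially the same route as the paper: expand the product, compare the coefficient of $z^i$ with $\bigl(\prod_{j<i} q_j^{\,i-j}\bigr)(1+\delta_i)$ using Theorem \ref{Thm_openGW}, and match terms via a weight-preserving bijection between $i$-element subsets of $\{0,\ldots,m-1\}$ and sequences admissible with center $i$. The only difference is in the mechanics of that bijection --- you use the closed-form tail-count substitution $s_j = c_j - (i-j)_+$, while the paper peels an admissible class inductively into chains $D_p + \cdots + D_{k_p}$ --- and your verification of the admissibility conditions, including the endpoint bounds and the transition at $j=i$, checks out.
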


\begin{proof}
Let $$h(z) = (1+z)(1+q_1 z)(1+q_1 q_2 z) \ldots (1 + q_1 \ldots q_{m-1} z).$$
By direct expansion, the coefficient of $z^p$ (p = 0, \ldots, m) is
$$ \sum_{k_1, \ldots, k_p} \left(\prod_{j=1 \ldots k_1} q_j \right) \ldots \left(\prod_{j=1 \ldots k_p} q_j \right)$$
where the sum is over all $(k_1, \ldots, k_p) \in \integer^p$ such that $0 \leq k_1 < \ldots < k_p \leq m-1$.  Notice that each summand can be written as
$\left(q_1^{p-1} \ldots q_{p-1}\right) q^\alpha $, where
$$\alpha = \left( D_p + \ldots + D_{k_p} \right) + \left( D_{p-1} + \ldots + D_{k_{p-1}} \right) + \left( D_1 + \ldots + D_{k_1} \right). $$
In this form it is clear that $\alpha \in H_2(X)$ is an admissible class with center $p$ in the sense of Theorem \ref{Thm_openGW}.

Conversely, let $\alpha=\sum_{k=1}^{m-1} s_k D_k$ be admissible, and we would like to find $k_j$ such that $\alpha$ is in the above form.  If $\alpha = 0$, we simply set $k_p = p-1, \ldots, k_1 = 0$.  Otherwise let $k_p$ be the greatest integer among $\{1, \ldots, m-1\}$ such that $s_k \not= 0$.  Then by condition (3) of admissibility, $s_j > 0$ for $j = p, \ldots, k_p$.  Thus
$$\alpha = (D_p + \ldots + D_{k_p}) + \sum_{k=1}^{m-1} s'_k D_k $$
with $s'_k \geq 0$.

If $s'_k = 0$ for all $k$, then we are done and set $k_{p-1} = p-2, \ldots, k_1 = 0$.  Otherwise, let $k_{p-1}$ be the greatest integer among $\{1, \ldots, m-1\}$ such that $s'_k \not= 0$.  By condition (3) and (4) of admissibility, $s'_{k_p} = \ldots = s'_{m-1} = 0$ and so $k_{p-1} < k_p$.  Condition (3) implies that $s'_j > 0$ for $j = p, \ldots, k_{p-1}$, and condition (2) implies that $s'_{p-1} > 0$.  Thus we can write
$$\alpha = (D_p + \ldots + D_{k_p}) + (D_{p-1} + \ldots D_{k_{p-1}}) + \sum_{k=1}^{m-1} s''_k D_k.$$

We proceeds by induction, and since $s_1 \leq 1$ by condition (4), it must end with
$$\alpha = \left( D_p + \ldots + D_{k_p} \right) + \left( D_{p-1} + \ldots + D_{k_{p-1}} \right) + \left( D_1 + \ldots + D_{k_1} \right). $$

Now it is clear that the coefficient of $z^p$ is
$$\sum_{\alpha} \left(q_1^{p-1} \ldots q_{p-1}\right) q^\alpha $$
where the summation is over all admissible $\alpha$.  By Theorem \ref{Thm_openGW}, this equals to $$ \left(q_1^{p-1} \ldots q_{p-1}\right) \sum_{\alpha} n_{\beta_p + \alpha} q^\alpha.$$
Thus the defining equation of the mirror can be written as stated.
\end{proof}

Now we prove Theorem \ref{Thm_openGW}.

\begin{proof}[(Proof of Theorem \ref{Thm_openGW})]
It was proved by Chan \cite{Chan10} that for canonical line bundles $X = K_Z$ of toric Fano manifolds $Z$, $n_\beta$ equals to some \emph{closed} Gromov-Witten invariants of the fiberwise compactification $\bar{K}_Z$.  In \cite{LLW10} the arguments are modified slightly to generalize to local Calabi-Yau manifolds $X$.  We now apply them to the current situation that $\dim X = 2$.

To compute $n_{\beta_l + \alpha}$, we consider the toric compactification $Y = \bar{X}$ along the $v_l$ direction: The fan of $\bar{X}$ is convex consisting of rays generated by $v_i = (i,1)$ for $i = 0, \ldots, m$, $(1,0)$, $(-1,0)$ and $v_\infty = -v_l$ (the rays generated by $(1,0)$ and $(-1,0)$ are added to make $\bar{X}$ smooth).  Let $h \in H_2 (\bar{X})$ be the class determined by the intersection properties
$h \cdot D_l = h \cdot D_\infty = 1$
and $ h \cdot D = 0$ for all other irreducible toric divisors $D$ (see Figure \ref{bar_X}).  Intuitively $h$ corresponds to the disk class $\beta_l$.

\begin{figure}[htp]
\caption{A sphere representing $h \in H_2 (\bar{X})$.} \label{bar_X}
\begin{center}
\includegraphics[height=160pt,width=346pt]{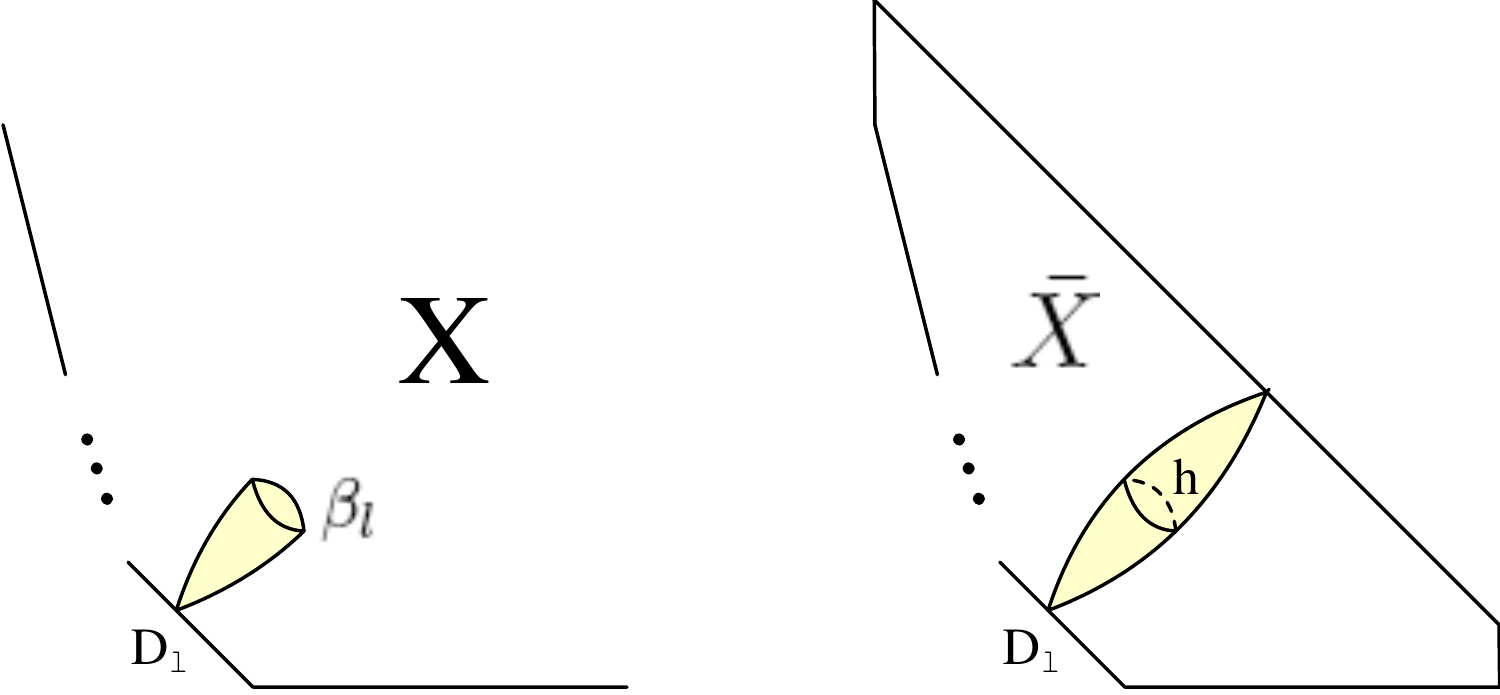}
\end{center}
\end{figure}

By comparing the Kuranishi structures on the open and closed moduli (see Proposition 4.4 in \cite{LLW10} for the details), one has
$$ n_{\beta_l + \alpha} = \mathrm{GW}_{0,1}^{Y, h + \alpha} ([\mathrm{pt}]).$$
The right hand side in the above formula is the genus zero one-pointed closed Gromov-Witten invariant of $Y = \bar{X}$ for the class $h + \alpha$.  Thus it remains to compute $\mathrm{GW}_{0,1}^{Y, h + \alpha} ([\mathrm{pt}])$.

Now we may apply the result by Hu \cite{hu00} and Gathmann \cite{gathmann} which removes the point condition by blow-up:
\begin{equation*}
\mathrm{GW}_{0,1}^{Y, h + \alpha} ([\mathrm{pt}]) = \mathrm{GW}_{0,0}^{\tilde{Y}, \pi^!(h + \alpha) -e}
\end{equation*}
where
$\pi:{\tilde Y} \to Y$ is the blow-up of $Y$ at a point, $e \in H_2 (\tilde Y)$ is the corresponding exceptional class, and $\pi^!(b) := \mathrm{PD}(\pi^*\mathrm{PD}(b))$ for $b \in H_2(\bar{X})$.

Writing $\alpha = \sum_{k=1}^{m-1} s_k [D_k]$, one has
$$\pi^!(h + \alpha) - e = [C] + \sum_{k=1}^{m-1} s_k [D_k]$$
where $C$ is a $(-1)$-curve and $D_k$ are $(-2)$-curves, and their intersection configuration is as shown in Figure \ref{curve_config}.  The Gromov-Witten invariant $\mathrm{GW}_{0,0}^{\tilde{Y}, [C] + \sum_{k=1}^{m-1} s_k [D_k]}$ has already been computed by Bryan-Leung \cite{bryan-leung00}, and the result is that the invariant is $1$ when the sequence $\{s_k\}_{k=1}^{m-1}$ is admissible with center $l$, and $0$ otherwise.  The sense of admissibility for a sequence of integers is the one written in Theorem \ref{Thm_openGW}. 

\begin{figure}[htp]
\caption{A chain of $\proj^1$'s.} \label{curve_config}
\begin{center}
\includegraphics[height=61pt,width=218pt]{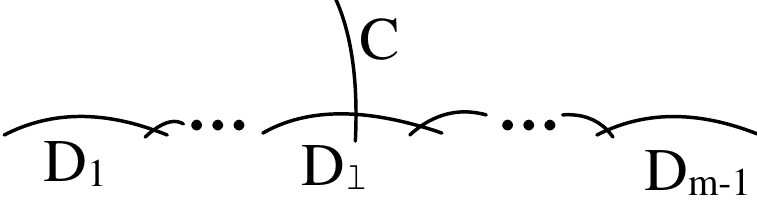}
\end{center}
\end{figure}

\end{proof}

\subsection{Proof of Theorem \ref{can_coord}} \label{proof}
Having an explicit expression of the SYZ mirror (see Corollary \ref{def_eq}), we are prepared to prove the main theorem.

\subsubsection{The holomorphic volume form.} \label{vol_form} First we need to write down the holomorphic volume form on $\check{X}$.  It is known that the semi-flat mirror $\check{X}_0$ has a holomorphic volume form which is simply written as $\der \log z_1 \wedge \der \log z_2$ in terms of the local semi-flat complex coordinates $(z_1, z_2)$.  In \cite{chan08,Chan-Leung}, this (semi-flat) holomorphic volume form is written as Fourier transform of the symplectic form on $X$.  Now recall that $\check{X}$ is glued by two semi-flat pieces 
$\iota_\pm: \check{X}_\pm \to \check{X}$ (Equation \eqref{iota+} and \eqref{iota-}).  One has
$$ \iota^*_\pm (\der \log z \wedge \der \log u) = \der \log z_1 \wedge \der \log z_2 $$
which means that the semi-flat holomorphic volume forms on the two pieces $\check{X}_\pm$ glue up, and it is a direct computation to see that $\der \log z \wedge \der \log u$ extends to give a holomorphic volume form $\check{\Omega}$ on $\check{X}$.  (This has already been discussed in the paper \cite{CLL}).

\subsubsection{$H_2(X) \cong H_2(\check{X})$} \label{S_l}
Now let's turn to the construction of the natural isomorphism $H_2(X) \cong H_2(\check{X})$.  Consider the basis $\{\theta_i = [D_i] \}_{i=1}^{m-1} \subset H_2(X)$.  We would like to perform SYZ transformation on each $D_l$ to give a dual chain $\check{D}_l \subset \check{X}_-$.  We'll see that $\iota_- (\check{D}_l) \subset \check{X}$ is homologous to a chain $C_K$ in $\check{X}$ which limits to a cycle $\check{\Theta}_l$ as $K \to +\infty$.  (Alternatively one may consider the dual chain in the other semi-flat piece $\iota_+: \check{X}_+ \to \check{X}$ instead which leads to the same result).

First we write $D_l$ as
$$\{x \in X: \mu (x) \in [T_l] \times \{0\}; \arg (w(x)-K) = \pi\} $$
where we recall that $T_l$ is the edge of $P$ corresponding to the toric divisor $D_l$, so that $[T_l] \times \{0\}$ is the line segment in $B$ connecting the two points $Q_{l}$ and $Q_{l+1} \in B$ which lie in the discriminant loci of $\mu$ (see Figure \ref{D_l}).  In this expression we can see that $D_l$ is a circle fibration over the line segment $[T_l] \times \{0\}$.

Under T-duality, it induces a dual circle fibration supported in $\check{X}_-$ over the same line segment, which is written explicitly as
$$ \check{D}_l = \{ (z_1,z_2) \in \check{X}_-: \check{\mu} (z_1,z_2) \in [T_l] \times \{0\}; \arg z_1 = \pi \} $$
where we recall that $\check{\mu}$ is the bundle map given in Section \ref{T-duality}.

\begin{figure}[htp]
\caption{A toric divisor.} \label{D_l}
\begin{center}
\includegraphics[height=145pt,width=217pt]{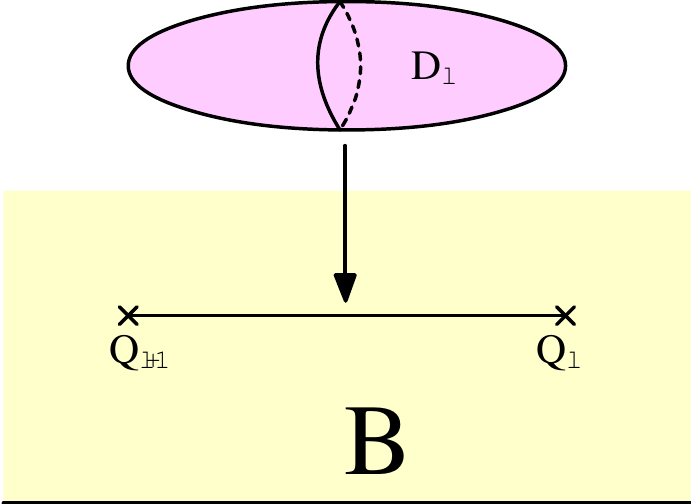}
\end{center}
\end{figure}

From Section \ref{T-duality}, the value of $|z_i|$ (i = 1,2) on the fiber $\check{F}_r$ of $\check{\mu}$ at $r \in U \subset B_0$ is
$ \exp \left( -\int_{\Delta_i (r)} \omega \right) $
where each $\Delta_i (r) \in \pi_2(X, F_r)$ is represented by a disk as shown in Figure \ref{A_n_disks}.  For $r = \check{\mu} (z_1,z_2) \in [T_l] \times \{0\}$, $\prod_{i=1}^{l-1} q_i^{-1} \leq |z_1(r)|
\leq \prod_{i=1}^{l} q_i^{-1}$.  Together with $\arg z_1 = \pi$, one has
$$ z_1 \left( \check{D}_l \right)
= \left[ - \prod_{i=1}^{l} q_i^{-1}, - \prod_{i=1}^{l-1} q_i^{-1} \right].$$

The boundary of $\check{D}_l$ consists of two disjoint circles $C_l$ and $C_{l+1}$ lying in the fibers $\check{F}_{Q_{l}}$ and $\check{F}_{Q_{l+1}}$, on which $z_1$ takes values $-\prod_{i=1}^{l-1} q_i^{-1}$ and $-\prod_{i=1}^{l} q_i^{-1}$ respectively.  Let's denote by $a_j$ the values of $|z_2|^2$ on $\check{F}_{Q_{j}}$, so that the values of $|z_2|^2$ on $C_l$ and $C_{l+1}$ are $a_l$ and $a_{l+1}$ respectively.

Now let's consider the chain 
$$\iota_- (\check{D}_l) \subset \check{X} = \{(u,v,z): uv = g(z)\}$$
(see Figure \ref{dual_chain}).  By Equation \eqref{iota-},
$(z, u, v) = \iota_- (z_1, z_2) = (z_1, z_2 ,z_2^{-1} g (z_1))$
where according to Corollary \ref{def_eq},
$$g(z) = (1+z)(1+q_1 z)(1+q_1 q_2 z) \ldots (1 + q_1 \ldots q_{m-1} z).$$
On the boundaries $\iota_- (C_j)$ ($j = l, l+1$) one has $z = - q_1^{-1} \ldots q_{j-1}^{-1}$, which are roots to the equation $g(z) = 0$, and so
$$z = - \prod_{i=1}^{j-1} q_i^{-1}; v = 0; |u|^2 = a_j.$$
For each $z \in \left[ - \prod_{i=0}^{l} q_i^{-1}, - \prod_{i=0}^{l-1} q_i^{-1} \right]$, the fiber of $\iota_- (\check{D}_l)$ at $z$ is a circle in the cylinder $\{(u,v) \in \cpx^2: uv = g(z)\}$.  Let $f: \left[ - \prod_{i=0}^{l} q_i^{-1}, - \prod_{i=0}^{l-1} q_i^{-1} \right] \to \real$ be an affine linear function which takes values $a_{l+1}$ and $a_{l}$ at the endpoints $- \prod_{i=0}^{l} q_i^{-1}$ and $- \prod_{i=0}^{l-1} q_i^{-1}$ respectively.  Then the fiber $\iota_- (\check{D}_l)|_{z}$ is homotopic to the circle $$\{(u,v) \in \cpx^2: uv = g(z); |u|^2 - |v|^2 = f(z)\}.$$
Thus $\iota_- (\check{D}_l) \subset \check{X}$ is homologous (with boundary being fixed) to the chain
$$\left\{ (u,v,z) \in \check{X}: z \in \left[ - \prod_{i=0}^{l} q_i^{-1}, - \prod_{i=0}^{l-1} q_i^{-1} \right]; |u|^2 - |v|^2 = f(z) \right\}. $$

\begin{figure}[htp]
\caption{The mirror cycles.} \label{dual_chain}
\begin{center}
\includegraphics[height=139pt,width=354pt]{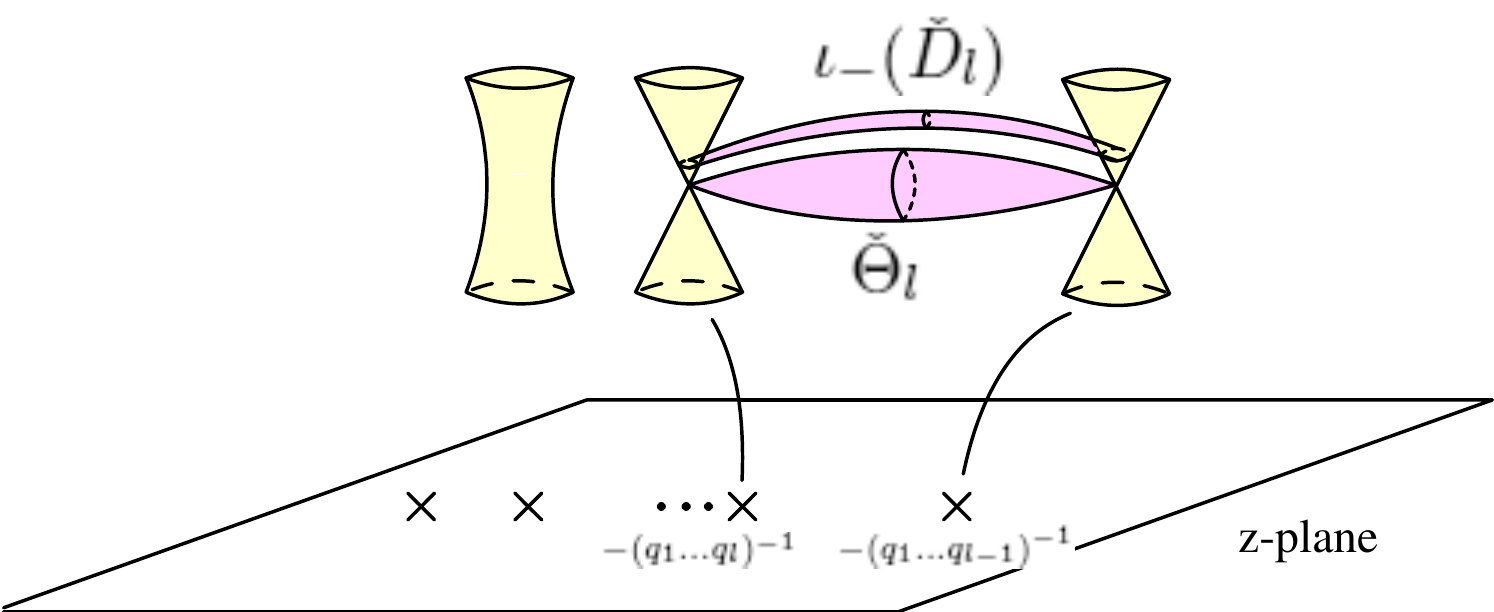}
\end{center}
\end{figure}

Now taking the limit $K \to +\infty$, all $a_j$ tend to $0$, so that $f$ tends to $0$ uniformly.  Thus the above chain limits to
$$ S_l = \left\{ (u, v, z) \in \check{X}: |u| = |v|; z \in \left[ - \prod_{i=0}^{l} q_i^{-1}, - \prod_{i=0}^{l-1} q_i^{-1} \right] \right\} $$
which is a submanifold without boundary in $\check{X}$, and we denote its class by $\check{\Theta}_l \in H_2 (\check{X})$.  $\{\check{\Theta}_l\}_{l=1}^{m-1}$ forms a basis of $H_2 (\check{X})$, and so the map $\theta_l \mapsto \check{\Theta}_l$ gives the required isomorphism $H_2(X) \cong H_2(\check{X})$.

\subsubsection{The periods.} It remains to compute the periods of $\check{X}$ directly:

$$\int_{\check{\Theta}_l} \check{\Omega} = \int_{S_l} \check{\Omega} = \int_{- (q_1 \ldots q_l)^{-1}}^{- (q_1 \ldots q_{l-1})^{-1}} \der \log z = \log q_l = -\int_{\theta_l} \omega.$$

\subsection{Hyper-K\"ahler twist} \label{hyperKaehler}
This subsection aims to relate our SYZ approach with hyper-K\"ahler twist for toric Calabi-Yau surfaces.  Namely, we prove that the hyper-K\"ahler periods of a toric Calabi-Yau surface and its SYZ mirror satisfy Equation \eqref{period map}.

Let's begin with the general theme.  Let $(X, g)$ be an irreducible hyper-K\"ahler manifold, that is, the holonomy group of the Levi-Civita connection induced by the metric $g$ is $\Sp (n)$.  Then $X$ has three parallel orthogonal complex structures $I, J, K$, and all other parallel orthogonal complex structures are given by $a I + b J + c K$ with $|a|^2 + |b|^2 + |c|^2 = 1$, forming an $\sphere{2}$-family.  Moreover we have three parallel K\"ahler forms $\omega_I, \omega_J, \omega_K$ induced from $I, J, K$ respectively.  The holomorphic symplectic form with respect to $I$ is (with a choice of constant multiple) $\Omega_I = - \omega_K + \consti \omega_J$.

Now fixing a basis $\{\theta_i\}_{i=1}^N$ of $H_2(X, \rat)$, we may consider the hyper-K\"ahler periods
\begin{equation*}
\Pi_I = \left( \int_{\theta_1} \omega_I , \ldots ,  \int_{\theta_N} \omega_I \right);
\Pi_J = \left( \int_{\theta_1} \omega_J, \ldots ,  \int_{\theta_N} \omega_J \right);
\Pi_K = \left( \int_{\theta_1} \omega_K, \ldots ,  \int_{\theta_N} \omega_K \right)
\end{equation*}
which span a lightlike subspace in $H^2(X, \real)$.  (When $X$ is compact, this subspace determines the hyper-K\"ahler metric $g$.) 

It is expected that for a hyper-K\"ahler manifold $(X, \omega_I, \omega_J, \omega_K)$, the mirror can be obtained by a hyper-K\"ahler twist.  This means $\check{X}$ is the same as $X$ as a smooth manifold, but with a different choice of complex structure:  $(\check{X}, \check{\omega}_I, \check{\omega}_J, \check{\omega}_K) = (X, \omega_K, \omega_J, \omega_I)$.  In terms of the hyperK\"ahler periods, it means that
\begin{equation} \label{period map}
\check{\Pi}_I = \Pi_K; \check{\Pi}_J = \Pi_J; \check{\Pi}_K = \Pi_K.
\end{equation}
By the identity $\SU (2) = \Sp (1)$, a Calabi-Yau surface is automatically hyper-K\"ahler.  Thus the above expectation about hyper-K\"ahler periods applies to Calabi-Yau surfaces.

\begin{remark}
In general one has to incorporate $B$-fields in mirror symmetry.  Roughly speaking, it means that one has to complexify the K\"ahler cone in order to compare it with the complex moduli of the mirror.  Equation \eqref{period map} is under the condition that we switch off the $B$-field.  When $B$-field is present, the relation between hyper-K\"ahler twist and mirror symmetry is more subtle.  We are thankful to the referee for drawing our attention to this point.
\end{remark}

Now let's come back to our situation that $X = X_{\Sigma_m}$ is a toric Calabi-Yau surface whose toric complex structure is denoted by $I$.  Let $\omega_I$ be the toric symplectic form\footnote{The statement to be made here is in the homology level instead of in the chain level.  In particular we simply use the toric K\"ahler metric instead of the Ricci-flat one, as we only care about its K\"ahler class instead of the actual form.}, and $\Omega = -\omega_K + \consti \omega_J$ be the toric holomorphic volume form.  Via SYZ the mirror $\check{X}$ is constructed (see Theorem \ref{mirror theorem}), which is a complex hypersurface in $\cpx^2 \times \cpx^\times$, so that the standard symplectic form 
$$\der u \wedge \overline{\der u} + \der v \wedge \overline{\der v} + \der \log z \wedge \overline{\der \log z} $$
on $\cpx^2 \times \cpx^\times$ restricts to give a symplectic form $\check{\omega}_I$ on $\check{X}$.  $\check{X}$ is also equipped with a holomorphic volume form $\check{\Omega} = -\check{\omega}_K + \consti \check{\omega}_J$ (Section \ref{vol_form}).  Then as a consequence of Theorem \ref{can_coord}, the mirror $\check{X}$ constructed via SYZ matches with the above discussion:

\begin{corollary}
Let $(X = X_{\Sigma_m}, \omega_I, \omega_J, \omega_K)$ and $(\check{X}, \check{\omega}_I, \check{\omega}_J, \check{\omega}_K)$ be the mirror pairs as discussed above.  Then the corresponding periods $(\Pi_I, \Pi_J, \Pi_K)$ of $X$ and $(\check{\Pi_I}, \check{\Pi_J}, \check{\Pi_K})$ of $\check{X}$ satisfy Equation \ref{period map}.
\end{corollary}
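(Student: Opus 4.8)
The plan is to reduce the Corollary to the period computation already carried out in the proof of Theorem \ref{can_coord}. The key observation is that the hyper-K\"ahler periods are not independent data: since $\Omega_I = -\omega_K + \consti\omega_J$ is the holomorphic volume form with respect to the complex structure $I$, the pair $(\Pi_J, \Pi_K)$ is exactly the real and imaginary parts of the period vector $\left(\int_{\theta_1}\Omega, \ldots, \int_{\theta_N}\Omega\right)$, while $\Pi_I$ records the symplectic areas $\int_{\theta_i}\omega$. So the content of Equation \eqref{period map} for a Calabi-Yau surface is really two statements: first, that the symplectic areas of two-cycles in $X$ equal the holomorphic periods of $\check{X}$ (and vice versa), which is the mirror-map statement; and second, a compatibility statement concerning the K\"ahler periods of the mirror.

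First I would set up the dictionary precisely. On the $X$ side, fix the basis $\{\theta_j = [D_j]\}_{j=1}^{m-1}$ of $H_2(X,\integer)$. The toric divisors $D_j$ are holomorphic curves (in fact $\proj^1$'s) with respect to $I$, hence $\int_{\theta_j}\Omega = \int_{D_j}(-\omega_K + \consti\omega_J) = 0$ because the restriction of a holomorphic volume form to a complex curve vanishes for dimension reasons; thus $\Pi_J = \Pi_K = 0$ on this basis, while $\Pi_I$ has entries $\int_{D_j}\omega_I = -\log q_j$. On the $\check{X}$ side, via the isomorphism $H_2(X,\integer)\cong H_2(\check{X},\integer)$ of Theorem \ref{can_coord} sending $\theta_j\mapsto\check\Theta_j$, one has $\int_{\check\Theta_j}\check\Omega = -\int_{\theta_j}\omega$ by Equation \eqref{eq_can_coord}, which gives $\check\Pi_J$ and $\check\Pi_K$ directly (real and imaginary parts of $\int_{\check\Theta_j}\check\Omega$). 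The remaining thing to pin down is $\check\Pi_I$, i.e.\ the symplectic areas $\int_{\check\Theta_j}\check\omega_I$ of the cycles $S_l$ against the restriction of the standard form $\der u\wedge\overline{\der u} + \der v\wedge\overline{\der v} + \der\log z\wedge\overline{\der\log z}$.

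The main step is therefore to show that the mirror K\"ahler periods $\check\Pi_I$ vanish on the basis $\{\check\Theta_l\}$, matching $\Pi_K = 0$; combined with the dictionary above, Equation \eqref{period map} then reads $\check\Pi_I = 0 = \Pi_K$, $\check\Pi_J = \Pi_J$, $\check\Pi_K = \Pi_K$, where the latter two are Equation \eqref{eq_can_coord} read off in real and imaginary parts (noting $\Pi_J = \Pi_K = 0$ forces $\check\Pi_J = \check\Pi_K$, consistent with $\check\Omega$ being real-valued on these cycles). To compute $\int_{S_l}\check\omega_I$, recall $S_l = \{(u,v,z)\in\check X : |u|=|v|,\ z\in[-(q_1\ldots q_l)^{-1}, -(q_1\ldots q_{l-1})^{-1}]\}$; here $z$ is real and negative along the segment, so $\der\log z$ is real and contributes nothing to a $(1,1)$-type area form, and on the locus $|u|=|v|$ with $uv = g(z)$ fixed the factor $\der u\wedge\overline{\der u} + \der v\wedge\overline{\der v}$ restricts to a form whose integral over the $S^1$-fibration collapses — essentially $S_l$ is Lagrangian, or more carefully isotropic of the right codimension, for $\check\omega_I$. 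I expect this last point — verifying that the explicit surface $S_l$ has vanishing $\check\omega_I$-area by an honest local computation in the $(u,v,z)$ coordinates — to be the only real obstacle; everything else is bookkeeping with the already-established Theorem \ref{can_coord}. Once that vanishing is in hand, the Corollary follows by assembling the three period vectors and invoking Equation \eqref{eq_can_coord}.
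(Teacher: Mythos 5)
Your argument is in substance the paper's own: holomorphicity of the divisors $D_j$ kills $\Pi_J$ and $\Pi_K$, the identity \eqref{eq_can_coord} from Theorem \ref{can_coord} supplies the nontrivial period identification, and the one real claim left is that $S_l$ has vanishing $\check{\omega}_I$-area --- which is exactly what the paper asserts (it states that $S_l$ is special Lagrangian for $(\check{\omega}_I,\check{\Omega})$, using $\mathrm{Im}\,\check{\Omega}|_{S_l}=0$ for $\check{\Pi}_J=0$, whereas you get $\check{\Pi}_J=0$ from the realness of $\int_{\check{\Theta}_j}\check{\Omega}=\log q_j$; that difference is negligible). The Lagrangian verification you postpone does go through in one line: between consecutive roots $g$ is real of constant sign, so on $S_l$ one has $z$ real and $v=\pm\bar{u}$, hence $\der\log z\wedge\overline{\der\log z}$ and $\der u\wedge\overline{\der u}+\der v\wedge\overline{\der v}$ both pull back to zero.

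The one slip is in your final bookkeeping. What your dictionary (and the paper's proof) actually establishes is $\check{\Pi}_I=\Pi_K=0$, $\check{\Pi}_J=\Pi_J=0$, and $\check{\Pi}_K=\Pi_I$, the last being the real part of \eqref{eq_can_coord}; the printed relation $\check{\Pi}_K=\Pi_K$ in Equation \eqref{period map} is evidently a misprint for the twist relation $\check{\Pi}_K=\Pi_I$ coming from $(\check{\omega}_I,\check{\omega}_J,\check{\omega}_K)=(\omega_K,\omega_J,\omega_I)$, and the paper's proof indeed ends by concluding $\Pi_I=\check{\Pi}_K$. Your parenthetical reconciliation ``$\Pi_J=\Pi_K=0$ forces $\check{\Pi}_J=\check{\Pi}_K$'' is false by your own computation: $\check{\Pi}_J=0$ while $\check{\Pi}_K=\Pi_I$ has the nonzero entries $-\log q_j$. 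Drop that remark and state the conclusion as $\check{\Pi}_I=\Pi_K$, $\check{\Pi}_J=\Pi_J$, $\check{\Pi}_K=\Pi_I$, and the proof is complete and coincides with the paper's.
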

\begin{proof}
Since $D_i$ are complex submanifolds with respect to the toric complex structure, one has
$$\int_{\theta_i} \Omega = -\int_{\theta_i} \omega_K + \consti \int_{\theta_i} \omega_J = 0$$
for all $i$.  On the other hand, $S_l \subset \check{X}$ defined in \ref{S_l} is special Lagrangian with respect to $(\check{\omega}_I, \check{\Omega})$, that is, $\check{\omega}_I|_{S_l} = 0 = \mathrm{Im} \check{\Omega} |_{S_l}$.  Thus
$$\int_{\check{\Theta}_i} \check{\omega}_I = 0 = \int_{\check{\Theta}_i} \check{\omega_J}.$$
This gives $\Pi_J = \check{\Pi}_J = 0$ and $\Pi_K = \check{\Pi}_I = 0$.  From Theorem \ref{can_coord},
$$ \int_{\theta_j} \omega_I = - \int_{\check{\Theta}_j}\check{\Omega} = \int_{\check{\Theta}_j} \check{\omega}_K $$
which means $\Pi_I = \check{\Pi}_K$.
\end{proof}

\bibliographystyle{amsplain}
\bibliography{geometry}

\end{document}